\theoremstyle{plain}
\newtheorem{theorem}{Theorem}[section]
\newtheorem{lemma}[theorem]{Lemma}
\newtheorem{proposition}[theorem]{Proposition}
\theoremstyle{definition}
\newtheorem{definition}[theorem]{Definition}
\theoremstyle{remark}
\newtheorem{remark}{\sc Remark}
\date{}
\title{\bf Intuitionistic Fuzzy Points in Semigroups}\vspace{.25 in}
\author{ {\bf Sujit Kumar Sardar$^1$}, {\bf Manasi Mandal$^2$} \\and \\{\bf Samit Kumar Majumder$^3$}\\
Department of Mathematics, Jadavpur\\
University, Kolkata-700032, INDIA\\
{\tt $^1$sksardarjumath@gmail.com}\\
{\tt $^2$manasi$_{-}$ju@yahoo.in}\\
{\tt $^3$samitfuzzy@gmail.com}
 }
\begin{document}
\maketitle

\begin{abstract}

The notion of intuitionistic fuzzy sets was introduced by \textit{Atanassov} as a generalization of the notion of fuzzy sets. {\it Y.B. Jun} and {\it S.Z. Song} introduced the notion of intuitionistic fuzzy points. In this paper we find some relations between the intuitionistic fuzzy ideals of a semigroup $S$ and the set of all intuitionistic fuzzy points of $S.$\\

\textbf{AMS Mathematics Subject Classification(2000):}\textit{\ }08A72,
20M12, 3F55

\textbf{Key Words and Phrases:}\textit{\ }Semigroup, Regular$($intra-regular$)$ semigroup, Intuitionistic fuzzy point, Intuitionistic fuzzy subsemigroup, Intuitionistic fuzzy ideal, Intuitionistic fuzzy interior ideal, Intuitionistic fuzzy semiprime ideal, Intuitionistic fuzzy prime ideal.
\end{abstract}

\section{Introduction}
A  semigroup is an algebraic structure consisting of a non-empty set $S$ together with an associative binary operation\cite{H}. The formal study of semigroups began in the early 20th century. Semigroups are important in many areas of mathematics, for example, coding and language theory, automata theory, combinatorics and mathematical analysis. The concept of fuzzy sets was introduced by {\it Lofti Zadeh}\cite{Z} in his classic paper in 1965. {\it Azirel Rosenfeld}\cite{R} used the idea of fuzzy set to introduce the notions of fuzzy subgroups. {\it Nobuaki Kuroki}\cite{K1,K2,K3} is the pioneer of fuzzy ideal theory of semigroups. The idea of fuzzy subsemigroup was also introduced by {\it Kuroki}\cite{K1,K3}. In \cite{K2}, {\it Kuroki} characterized several classes of semigroups in terms of fuzzy left, fuzzy right and fuzzy bi-ideals. Others who worked on fuzzy semigroup theory, such as {\it X.Y. Xie}\cite{X1,X2}, {\it Y.B. Jun}\cite{J1,J2}, are mentioned in the bibliography. {\it X.Y. Xie}\cite{X1} introduced the idea of extensions of fuzzy ideals in semigroups. The notion of intuitionistic fuzzy sets was introduced by {\it Atanassov}\cite{A1,A2,A3} as a generalization of the notion of fuzzy sets. {\it Pu} and {\it Liu}\cite{P} introduced the notion of fuzzy points. In \cite{W}, {\it X.P. Wang, Z.W. Mo, W.J. Liu}, in \cite{Y} {\it Y.H. Yon} and in \cite{K} {\it K.H. Kim} characterized fuzzy ideals as fuzzy points of semigroups. {\it Y.B. Jun} and {\it S.Z. Song} introduced the notion of intuitionistic fuzzy points\cite{J2}. In this paper, we consider the semigroup $\underline{S}$ of the intuitionistic fuzzy points of a semigroup $S,$ and discuss some relations between the fuzzy subsemigroups$($fuzzy bi-ideals, fuzzy interior ideals, fuzzy ideals, fuzzy prime ideals, fuzzy semiprime ideals$)$ of $S$ and the subsets of $\underline{S}.$ Among other results we obtain some characterization theorems of regular and intra-regular semigroups in terms of intuitionistic fuzzy points.
\section{Preliminaries}

In this section we discuss some elementary definitions that we use
in the sequel.\\

\begin{definition}
\cite{Mo} If $(X,\ast)$ is a mathematical system such that $\forall a,b,c\in X,$ $(a\ast b)\ast c=a\ast(b\ast c),$ then $\ast$ is called associative and $(X,\ast)$ is called a {\it semigroup}.
\end{definition}

\begin{definition}
\cite{Mo} A {\it subsemigroup} of a semigroup $S$ is a non-empty subset $I$ of $S$ such that $I^{2} \subseteq I.$
\end{definition}

\begin{definition}
\cite{Mo} A subsemigroup $I$ of a semigroup $S$ is a called an {\it interior ideal} of $S$ if $SIS\subseteq I.$
\end{definition}

\begin{definition}
\cite{Mo} A subsemigroup $I$ of a semigroup $S$ is a called a {\it bi-ideal} of $S$ if $ISI\subseteq I.$
\end{definition}

\begin{definition}
\cite{Mo} A {\it left} ({\it right})
{\it ideal} of a semigroup $S$ is a non-empty subset $I$ of $S$ such that $SI \subseteq I$ ($IS \subseteq I$). If $I$ is both a left and a right ideal of a semigroup $S$, then we say that $I$ is an {\it ideal} of $S$.
\end{definition}

\begin{definition}
\cite{Mo} Let $S$ be a semigroup. Then an ideal $I$ of $S$ is said to be $(i)$ {\it prime} if for ideals $A,B$ of $S,AB\subseteq I$ implies that $A\subseteq I$ or $B\subseteq I,(ii)$ {\it semiprime} if for an ideal $A$ of $S,A^{2}\subseteq I$ implies that $A\subseteq I$.
\end{definition}

\begin{definition}
\cite{A1,A2} The {\it intuitionistic fuzzy sets} defined on a non-empty set $X$ as objects having the form\\
$$A=\{<x,\mu_{A}(x),\nu_{A}(x)>:x\in X\},$$
where the functions $\mu_{A}: X\rightarrow [0,1]$ and $\nu_{A}: X\rightarrow [0,1]$ denote the degree of membership and the degree of non-membership of each element $x\in X$ to the set $A$ respectively, and $0\leq \mu_{A}(x)+\nu_{A}(x)\leq 1$ for all $x\in X.$

For the sake of simplicity, we shall use the symbol $A=(\mu_{A},\nu_{A})$ for the intuitionistic fuzzy subset $A=\{<x,\mu_{A}(x),\nu_{A}(x)>:x\in X\}.$
\end{definition}

\begin{definition}
\cite{J2} Let $\alpha,\beta\in [0,1]$ with $\alpha+\beta\leq 1.$ An {\it intuitionistic fuzzy point}, written as $x_{(\alpha,\beta)},$ is defined to be an intuitionistic fuzzy subset of $S,$ given by

\begin{align*}
x_{(\alpha,\beta)} (y)=\left\{
\begin{array}{ll}
(\alpha,\beta) & \text{if} \ x=y \\
(0,1) & \text{otherwise}
\end{array}
\right.
\end{align*}
\end{definition}

\begin{definition}
\cite{S} A non-empty intuitionistic fuzzy subset $A=(\mu_{A},\nu_{A})$ of a semigroup $S$ is called an {\it intuitionistic fuzzy subsemigroup} of $S$ if $(i)$ $\mu_{A}(xy)\geq\min\{\mu_{A}(x),\mu_{A}\\(y)\}$ $\forall x,y\in S,$ $(ii)$ $\nu_{A}(xy)\leq\max\{\nu_{A}(x),\nu_{A}(y)\}$ $\forall x,y\in S.$
\end{definition}

\begin{definition}
\cite{S} An intuitionistic fuzzy subsemigroup $A=(\mu_{A},\nu_{A})$ of a semigroup $S$ is called an {\it intuitionistic fuzzy interior ideal} of $S$ if $(i)$ $\mu_{A}(xay)\geq\mu_{A}(a)$ $\forall x,a,y\in S,$ $(ii)$ $\nu_{A}(xay)\leq\nu_{A}(a)$ $\forall x,a,y\in S.$
\end{definition}

\begin{definition}
\cite{S} An intuitionistic fuzzy subsemigroup $A=(\mu_{A},\nu_{A})$ of a semigroup $S$ is called an {\it intuitionistic fuzzy bi-ideal} of $S$ if $(i)$ $\mu_{A}(x\omega y)\geq\min\{\mu_{A}(x),\mu_{A}(y)\}$ $\forall x,\omega,y\in S,$ $(ii)$ $\nu_{A}(x\omega y)\leq\max\{\nu_{A}(x),\nu_{A}(y)\}$ $\forall x,\omega,y\in S.$
\end{definition}

\begin{definition}
\cite{S} A non-empty intuitionistic fuzzy subset $A=(\mu_{A},\nu_{A})$ of a semigroup $S$ is called an {\it intuitionistic fuzzy left$($right$)$ ideal} of $S$ if $(i)$ $\mu_{A}(xy)\geq\mu_{A}(y)($resp. $\mu_{A}(xy)\geq\mu_{A}(x))$\ $\forall x,y\in S,$ $(ii)$ $\nu_{A}(xy)\leq\nu_{A}(y)($resp. $\nu_{A}(xy)\leq\nu_{A}(x))$\ $\forall x,y\in S.$
\end{definition}

\begin{definition}
\cite{S} A non-empty intuitionistic fuzzy subset $A=(\mu_{A},\nu_{A})$ of a semigroup $S$ is called an {\it intuitionistic fuzzy two-sided ideal} or an {\it intuitionistic fuzzy ideal} of $S$ if it is both an intuitionistic fuzzy left and an intuitionistic fuzzy right ideal of $S.$
\end{definition}
Alternative definition of Definition $2.13,$ is as follows.

\begin{definition}
\cite{S} A non-empty intuitionistic fuzzy subset $A=(\mu_{A},\nu_{A})$ of a semigroup $S$ is called an {\it intuitionistic fuzzy ideal} of $S$ if $(i)$ $\mu_{A}(xy)\geq\max\{\mu_{A}(x),\mu_{A}(y)\}\forall x,\\y\in S,$ $(ii)$ $\nu_{A}(xy)\leq\min\{\nu_{A}(x),\nu_{A}(y)\}\forall x,y\in S.$
\end{definition}

\begin{definition}
\cite{S} An intuitionistic fuzzy ideal $A=(\mu_{A},\nu_{A})$ of a semigroup $S$ is called an {\it intuitionistic fuzzy semiprime ideal} of $S$ if $(i)$ $\mu_{A}(x)\geq\mu_{A}(x^{2})\forall x\in S,$ $(ii)$ $\nu_{A}(x)\leq\nu_{A}(x^{2})$ $\forall x\in S.$
\end{definition}

\begin{definition}
\cite{S} An intuitionistic fuzzy ideal $A=(\mu_{A},\nu_{A})$ of a semigroup $S$ is called an {\it intuitionistic fuzzy prime ideal} of $S$ if $(i)$ $\mu_{A}(xy)=\max\{\mu_{A}(x),\mu_{A}(y)\}\forall x,y\in S,$ $(ii)$ $\nu_{A}(xy)=\min\{\nu_{A}(x),\nu_{A}(y)\}$ $\forall x,y\in S.$
\end{definition}

\section{Main Results}

Let $\Im(S)$ be the set of all intuitionistic fuzzy subsets of a semigroup $S.$ For each $A=(\mu_{A},\nu_{A}),$ $B=(\mu_{B},\nu_{B})\in \Im(S),$ the product of $A$ and $B$ is an intuitionistic fuzzy subset $A\circ B$ defined as follows:

$$A\circ B=\{<x,(\mu_{A}\circ\mu_{B})(x),(\nu_{A}\circ\nu_{B})(x)>:x\in S\}$$

$$
\text{ where } (\mu_{A}\circ \mu_{B})(x)=\left\{
\begin{array}
[c]{c}%
\underset{x=uv}{\sup}[\min\{\mu_{A}(u),\mu_{B}(v)\}:u,v\in S]\\
0,\text{ if for any }u,v\in S\text{ },x\neq uv
\end{array}
\right.\\
$$
$$
 \text{ and  }
(\nu_{A}\circ\nu_{B})(x)=\left\{
\begin{array}
[c]{c}%
\underset{x=uv}{\inf}[\max\{\nu_{A}(u),\nu_{B}(v)\}:u,v\in S]\\
1,\text{ if for any }u,v\in S\text{ },x\neq uv
\end{array}
\right.
$$

It is clear that $(A\circ B)\circ C=A\circ(B\circ C),$ and that if $A\subseteq B,$ then $A\circ C\subseteq B\circ C$ and $C\circ A\subseteq C\circ B$ for any $A,B,C\in \Im(S).$ Thus $\Im(S)$ is a semigroup with the product $\circ.$

\indent Let $\underline{S}$ be the set of all intuitionistic fuzzy points in a semigroup $S.$ Then $x_{(\alpha,\beta)}\circ y_{(\gamma,\delta)}=(xy)_{(\alpha\wedge\gamma,\beta\vee\delta)}\in \underline{S},$ where $\alpha\wedge\gamma=\min(\alpha,\gamma)$ and $\beta\vee\delta=\max(\beta,\delta)$ and $x_{(\alpha,\beta)}\circ (y_{(\gamma,\delta)}\circ z_{(\eta,\theta)})=(xyz)_{(\alpha\wedge\gamma\wedge\eta,\beta\vee\delta\vee\theta)}=(x_{(\alpha,\beta)}\circ y_{(\gamma,\delta)})\circ z_{(\eta,\theta)}$ for any $x_{(\alpha,\beta)},y_{(\gamma,\delta)},z_{(\eta,\theta)}\in\underline{S}.$ Thus $\underline{S}$ is a subsemigroup of $\Im(S).$

For any $A=(\mu_{A},\nu_{A})\in\Im(S),$ $\underline{A}$ denote the set of all intuitionistic fuzzy points contained in $A=(\mu_{A},\nu_{A})$, that is, $\underline{A}=\{x_{(\alpha,\beta)}\in\underline{S}:\mu_{A}(x)\geq\alpha$ and $\nu_{A}(x)\leq\beta\}.$ If $x_{(\alpha,\beta)}\in\underline{S},$ then $\alpha>0$ and $\beta<1.$

\begin{proposition}
Let $A=(\mu_{A},\nu_{A})$ and $B=(\mu_{B},\nu_{B})$ be two intuitionistic fuzzy subsets of a semigroup $S.$ Then

$(i)$ $\underline{A\cup B}=\underline{A}\cup\underline{B}.$

$(ii)$ $\underline{A\cap B}=\underline{A}\cap\underline{B}.$

$(iii)$ $\underline{A\circ B}\supseteq\underline{A}\circ\underline{B}.$

\end{proposition}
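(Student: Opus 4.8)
The plan is to unfold everything through the defining condition for membership in $\underline{C}$, namely $x_{(\alpha,\beta)}\in\underline{C}$ iff $\mu_{C}(x)\geq\alpha$ and $\nu_{C}(x)\leq\beta$, combined with the pointwise descriptions of the set operations: $\mu_{A\cup B}=\mu_{A}\vee\mu_{B}$, $\nu_{A\cup B}=\nu_{A}\wedge\nu_{B}$, $\mu_{A\cap B}=\mu_{A}\wedge\mu_{B}$, $\nu_{A\cap B}=\nu_{A}\vee\nu_{B}$. I would first record the trivial monotonicity remark: if $A\subseteq B$ then $\underline{A}\subseteq\underline{B}$, immediate from the definition of $\underline{(\cdot)}$. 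Since $A,B\subseteq A\cup B$ and $A\cap B\subseteq A,B$, this already gives the easy inclusions $\underline{A}\cup\underline{B}\subseteq\underline{A\cup B}$ and $\underline{A\cap B}\subseteq\underline{A}\cap\underline{B}$ for free.

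For $(ii)$ the remaining inclusion $\underline{A}\cap\underline{B}\subseteq\underline{A\cap B}$ is direct. Taking $x_{(\alpha,\beta)}\in\underline{A}\cap\underline{B}$ gives $\mu_{A}(x)\geq\alpha$, $\mu_{B}(x)\geq\alpha$, $\nu_{A}(x)\leq\beta$ and $\nu_{B}(x)\leq\beta$; the first two yield $\min\{\mu_{A}(x),\mu_{B}(x)\}\geq\alpha$ and the last two yield $\max\{\nu_{A}(x),\nu_{B}(x)\}\leq\beta$, which is precisely $x_{(\alpha,\beta)}\in\underline{A\cap B}$. Part $(ii)$ goes through cleanly because the conjunctive structure of membership in $\underline{(\cdot)}$ matches the $\min$/$\max$ appearing in the intersection.

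For $(i)$ the remaining inclusion $\underline{A\cup B}\subseteq\underline{A}\cup\underline{B}$ is the main obstacle, and it is the step I would scrutinise first. Membership $x_{(\alpha,\beta)}\in\underline{A\cup B}$ only gives $\max\{\mu_{A}(x),\mu_{B}(x)\}\geq\alpha$ together with $\min\{\nu_{A}(x),\nu_{B}(x)\}\leq\beta$, that is, ``$\mu_{A}(x)\geq\alpha$ or $\mu_{B}(x)\geq\alpha$'' and ``$\nu_{A}(x)\leq\beta$ or $\nu_{B}(x)\leq\beta$''; to land the point inside a single one of $\underline{A},\underline{B}$ these two disjunctions must be realised by the \emph{same} index. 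I would attempt to force this by bringing in the intuitionistic constraints $\mu+\nu\leq1$ on $A,B$ and $\alpha+\beta\leq1$ on the point, but since without such an interaction the two ``or'' choices may genuinely disagree, this is exactly the inclusion whose validity I would verify most carefully before committing to the equality as stated.

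Finally, for $(iii)$ I would use a single favourable factorisation. Given $x_{(\alpha,\beta)}\in\underline{A}$ and $y_{(\gamma,\delta)}\in\underline{B}$, recall $x_{(\alpha,\beta)}\circ y_{(\gamma,\delta)}=(xy)_{(\alpha\wedge\gamma,\beta\vee\delta)}$. Taking the decomposition $xy=xy$ inside the $\sup$/$\inf$ defining $\circ$ gives $(\mu_{A}\circ\mu_{B})(xy)\geq\min\{\mu_{A}(x),\mu_{B}(y)\}\geq\alpha\wedge\gamma$ and $(\nu_{A}\circ\nu_{B})(xy)\leq\max\{\nu_{A}(x),\nu_{B}(y)\}\leq\beta\vee\delta$, so $(xy)_{(\alpha\wedge\gamma,\beta\vee\delta)}\in\underline{A\circ B}$, whence $\underline{A}\circ\underline{B}\subseteq\underline{A\circ B}$. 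Equality is not expected here, and the one-sided statement is the correct one, because a general point of $\underline{A\circ B}$ need not split as the product of a point of $\underline{A}$ with a point of $\underline{B}$: the defining $\sup$/$\inf$ may be approached only by factorisations that do not come from honest intuitionistic fuzzy points.
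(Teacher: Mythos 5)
Your treatment of $(ii)$ and $(iii)$ is correct and essentially the paper's own: for $(ii)$ the paper runs the same purely conjunctive chain of equivalences, and for $(iii)$ the paper uses the same idea of bounding the $\sup$/$\inf$ defining $\circ$ by the particular factorisation $xy$ (your write-up is in fact cleaner; the paper's version is a garbled chain with a ``$\leq$'' between sets, but the underlying argument is identical, and your closing remark that equality should not be expected in $(iii)$ is also sound).

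The important point is $(i)$, and your refusal to commit is vindicated: the inclusion $\underline{A\cup B}\subseteq\underline{A}\cup\underline{B}$ is false in general, so the equality claimed in the proposition is itself wrong. The paper's proof passes from ``$(\mu_{A}(x)\geq\alpha$ or $\mu_{B}(x)\geq\alpha)$ and $(\nu_{A}(x)\leq\beta$ or $\nu_{B}(x)\leq\beta)$'' to ``$(\mu_{A}(x)\geq\alpha$ and $\nu_{A}(x)\leq\beta)$ or $(\mu_{B}(x)\geq\alpha$ and $\nu_{B}(x)\leq\beta)$'' as if this were an equivalence; only the right-to-left implication is valid, which is exactly the mismatched-index problem you identified. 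The intuitionistic constraints do not repair it. Concretely: take any semigroup $S$, fix $x\in S$, and let $A$, $B$ be the constant intuitionistic fuzzy sets with $\mu_{A}\equiv 0.6$, $\nu_{A}\equiv 0.4$, $\mu_{B}\equiv 0.3$, $\nu_{B}\equiv 0.2$ (both satisfy $\mu+\nu\leq 1$). Then $\mu_{A\cup B}(x)=0.6$ and $\nu_{A\cup B}(x)=0.2$, so $x_{(0.6,0.2)}\in\underline{A\cup B}$, and $0.6+0.2\leq 1$ makes it a legitimate intuitionistic fuzzy point; but $x_{(0.6,0.2)}\notin\underline{A}$ since $\nu_{A}(x)=0.4>0.2$, and $x_{(0.6,0.2)}\notin\underline{B}$ since $\mu_{B}(x)=0.3<0.6$. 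So only the inclusion $\underline{A}\cup\underline{B}\subseteq\underline{A\cup B}$, which you did prove via monotonicity, survives; your proposal establishes everything in the proposition that is actually true, and the ``gap'' in it is a defect of the statement, not of your argument.
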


\begin{proof} \begin{eqnarray*}
(i)\text{ Let }x_{(\alpha ,\beta )} &\in &\underline{A\cup B} \\
&\Leftrightarrow &\{x\in \underline{S}:(\mu _{A}\cup \mu _{B})(x)\geq \alpha
\text{ and }(\nu _{A}\cap \nu _{B})(x)\leq \beta \}\text{ } \\
&\Leftrightarrow &\{x\in \underline{S}:\max \{\mu _{A}(x),\mu _{B}(x)\}\geq
\alpha \text{  and }\min \{\nu _{A}(x),\nu _{B}(x)\}\leq \beta \} \\
&\Leftrightarrow &[x\in \underline{S}:\{\mu _{A}(x)\geq \alpha \text{ or }%
\mu _{B}(x)\geq \alpha \}\text{  and }\{\nu _{A}(x)\leq \beta \text{ or }\nu
_{B}(x)\leq \beta \} \\
&\Leftrightarrow &\{x\in \underline{S}:\mu _{A}(x)\geq \alpha \text{ and }%
\nu _{A}(x)\leq \beta \}\text{ or }\\
& &\{x\in \underline{S}:\mu _{B}(x)\geq
\alpha \text{ and }\nu _{B}(x)\leq \beta \} \\
&\Leftrightarrow &x_{(\alpha ,\beta )}\in \underline{A}\text{ or }x_{(\alpha ,\beta )}\in \underline{B}\\
&\Leftrightarrow &x_{(\alpha ,\beta )}\in \underline{A}\cup \underline{B}.
\end{eqnarray*}
Hence $\underline{A\cup B}=\underline{A}\cup \underline{B}.$

\begin{eqnarray*}
(ii)\text{ Let }x_{(\alpha ,\beta )} &\in &\underline{A\cap B} \\
&\Leftrightarrow &\{x\in \underline{S}:(\mu _{A}\cap \mu _{B})(x)\geq \alpha
\text{ and }(\nu _{A}\cup \nu _{B})(x)\leq \beta \}\text{ } \\
&\Leftrightarrow &\{x\in \underline{S}:\min \{\mu _{A}(x),\mu _{B}(x)\}\geq
\alpha \text{ and }\max \{\nu _{A}(x),\nu _{B}(x)\}\leq \beta \} \\
&\Leftrightarrow &[x\in \underline{S}:\{\mu _{A}(x)\geq \alpha \text{ and }%
\mu _{B}(x)\geq \alpha \}\text{ and }\\
& &\{\nu _{A}(x)\leq \beta \text{ and }\nu
_{B}(x)\leq \beta \}] \\
&\Leftrightarrow &\{x\in \underline{S}:\mu _{A}(x)\geq \alpha \text{ and }%
\nu _{A}(x)\leq \beta \}\text{ and }\\
& &\{x\in \underline{S}:\mu _{B}(x)\geq
\alpha \text{ and }\nu _{B}(x)\leq \beta \} \\
&\Leftrightarrow &x_{(\alpha ,\beta )}\in \underline{A}\text{ and }%
x_{(\alpha ,\beta )}\in \underline{B} \\
&\Leftrightarrow &x_{(\alpha ,\beta )}\in \underline{A}\cap \underline{B}.
\end{eqnarray*}
Hence $\underline{A\cap B}=\underline{A}\cap \underline{B}.$

$(iii)$
\begin{eqnarray*}
\underline{A}\circ \underline{B}&=&\{x_{(\alpha,\beta)}\circ y_{(\gamma,\delta)}:x_{(\alpha,\beta)}\in\underline{A},y_{(\gamma,\delta)}\in\underline{B}\}\\
&=&\{(xy)_{(\alpha\wedge\gamma,\beta\vee\delta)}:\mu_{A}(x)\geq\alpha\text{ and }\nu_{A}(x)\leq\beta,\mu_{B}(y)\geq\gamma\text{ and }\nu_{B}(y)\leq\delta\}\\
&=&\{(xy)_{(\alpha\wedge\gamma,\beta\vee\delta)}:\mu_{A}(x)\geq\alpha\text{ and }\mu_{B}(y)\geq\gamma,\nu_{B}(x)\leq\beta\text{ and }\nu_{B}(y)\leq\delta\}\\
&=&\{(xy)_{(\alpha\wedge\gamma,\beta\vee\delta)}:\min\{\mu_{A}(x),\mu_{B}(y)\}\geq\alpha\wedge\gamma,\max\{\nu_{B}(x),\nu_{B}(y)\}\leq\beta\vee\delta\}\\
&\leq &\{(xy)_{(\alpha\wedge\gamma,\beta\vee\delta)}:\underset{%
\begin{array}{c}
x,y\in S \\
\mu _{A}(x)\geq \alpha \\
\mu _{B}(y)\geq \gamma%
\end{array}%
}{\sup }[\min \{\mu _{A}(x),\mu _{B}(y)\}]\geq \alpha \wedge \gamma,\\
&&\underset{%
\begin{array}{c}
x,y\in S \\
\nu _{A}(x)\leq \beta \\
\nu _{B}(y)\leq \delta%
\end{array}
}{\inf }[\max \{\nu _{A}(x),\nu _{B}(y)\}]\leq \beta \vee \delta\}\\
&=& \{(u)_{(\alpha\wedge\gamma,\beta\vee\delta)}:(\mu_{A}\circ\mu_{B})(u)\geq\alpha\wedge\gamma,(\nu_{B}\circ\nu_{B})(u)\leq\beta\vee\delta\}\\
&=&\underline{A\circ B}.
\end{eqnarray*}

Hence $\underline{A}\circ \underline{B}\subseteq\underline{A\circ B}.$
\end{proof}

\begin{theorem}
Let $A=(\mu_{A},\nu_{A})$ be a non-empty intuitionistic fuzzy subset of a semigroup $S.$ Then following conditions are equivalent:

$(i)$ $A=(\mu_{A},\nu_{A})$ is an intuitionistic fuzzy subsemigroup of $S,$

$(ii)$ $\underline{A}$ is a subsemigroup of $\underline{S}.$
\end{theorem}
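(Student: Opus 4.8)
The plan is to prove the two implications separately, in each case transferring the defining inequalities of an intuitionistic fuzzy subsemigroup across the correspondence that records an intuitionistic fuzzy point $x_{(\alpha,\beta)}$ as a base point $x$ together with a level $(\alpha,\beta)$, and exploiting the product rule $x_{(\alpha,\beta)}\circ y_{(\gamma,\delta)}=(xy)_{(\alpha\wedge\gamma,\beta\vee\delta)}$ recorded just before the statement. The forward direction will be a routine ``lift the points'' argument, while the converse will run through the canonical points $x_{(\mu_A(x),\nu_A(x))}$.

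For $(i)\Rightarrow(ii)$ I would first note that $\underline{A}\neq\emptyset$ (this is where non-emptiness of $A$ enters) and then verify closure. Given $x_{(\alpha,\beta)},y_{(\gamma,\delta)}\in\underline{A}$, so that $\mu_A(x)\geq\alpha$, $\nu_A(x)\leq\beta$, $\mu_A(y)\geq\gamma$ and $\nu_A(y)\leq\delta$, their product is $(xy)_{(\alpha\wedge\gamma,\beta\vee\delta)}$, and this is again a genuine point of $\underline{S}$ since $\alpha\wedge\gamma>0$ and $\beta\vee\delta<1$. Using that $A$ is an intuitionistic fuzzy subsemigroup, $\mu_A(xy)\geq\min\{\mu_A(x),\mu_A(y)\}\geq\alpha\wedge\gamma$ and $\nu_A(xy)\leq\max\{\nu_A(x),\nu_A(y)\}\leq\beta\vee\delta$; hence the product lies in $\underline{A}$, so $\underline{A}$ is a subsemigroup of $\underline{S}$.

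For $(ii)\Rightarrow(i)$ I would fix $x,y\in S$ and feed the canonical points $x_{(\mu_A(x),\nu_A(x))}$ and $y_{(\mu_A(y),\nu_A(y))}$ into $\underline{A}$ (each is trivially contained in $A$), apply closure of $\underline{A}$ to obtain $(xy)_{(\mu_A(x)\wedge\mu_A(y),\,\nu_A(x)\vee\nu_A(y))}\in\underline{A}$, and read off $\mu_A(xy)\geq\min\{\mu_A(x),\mu_A(y)\}$ together with $\nu_A(xy)\leq\max\{\nu_A(x),\nu_A(y)\}$, which are exactly the two subsemigroup conditions.

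The step I expect to be the real obstacle, and the only non-formal point, is guaranteeing that these canonical points actually belong to $\underline{S}$, i.e.\ that $\mu_A(x)>0$ and $\nu_A(x)<1$, since $\underline{S}$ excludes the value $(0,1)$. When $\mu_A(x)>0$ the constraint $\mu_A(x)+\nu_A(x)\leq 1$ forces $\nu_A(x)<1$ and the point is legitimate; moreover in the degenerate case $\mu_A(x)=0$ the membership inequality holds automatically since its right-hand side is $0$. The genuinely delicate case is $\mu_A(x)=0$ with $\nu_A(x)<1$, where no point of $\underline{A}$ sits over $x$, so the non-membership inequality is invisible to $\underline{A}$ and the equivalence is not forced by closure alone. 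I would therefore settle the intended meaning of ``non-empty intuitionistic fuzzy subset'' (in particular whether $\mu_A(x)=0$ may occur with $\nu_A(x)<1$) before writing the formal proof, since pinning down this convention is precisely what makes the converse clean.
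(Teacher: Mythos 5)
Your proposal follows, step for step, the paper's own proof: the forward direction lifts two points of $\underline{A}$, uses $\mu_A(xy)\geq\alpha\wedge\gamma$ and $\nu_A(xy)\leq\beta\vee\delta$ together with $\alpha\wedge\gamma>0$, $\beta\vee\delta<1$, and concludes $\underline{A}\,\underline{A}\subseteq\underline{A}$; the converse feeds the canonical points $x_{(\mu_A(x),\nu_A(x))}$ and $y_{(\mu_A(y),\nu_A(y))}$ into the closure hypothesis and reads off the two inequalities. Those computations are correct and are exactly what the paper does.

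The case you flag at the end, $\mu_A(x)=0$ with $\nu_A(x)<1$, is not a loose end you failed to tidy up: it is a genuine gap, and it sits in the paper's proof as well. The paper's converse treats only the two cases ``$\mu_A(x)=\mu_A(y)=0$ and $\nu_A(x)=\nu_A(y)=1$'' and ``$\mu_A(x),\mu_A(y)\neq 0$ and $\nu_A(x),\nu_A(y)<1$,'' and is silent on yours; moreover no argument can cover it, because the implication $(ii)\Rightarrow(i)$ actually fails there. Take $S=\{x,y\}$ with $ab=y$ for all $a,b\in S$, and put $\mu_A(x)=\nu_A(x)=0$, $\mu_A(y)=0.3$, $\nu_A(y)=0.5$. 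Then $\underline{A}=\{y_{(\alpha,\beta)}:0<\alpha\leq 0.3,\ 0.5\leq\beta<1\}$ is closed under $\circ$ (products of such points stay over $y$ with levels in the same ranges), hence is a subsemigroup of $\underline{S}$; yet $\nu_A(x\cdot x)=\nu_A(y)=0.5>0=\max\{\nu_A(x),\nu_A(x)\}$, so $A$ is not an intuitionistic fuzzy subsemigroup. (The forward direction leaks at the same spot: $\mu_A\equiv 0$, $\nu_A\equiv 0.5$ is a non-empty intuitionistic fuzzy subsemigroup with $\underline{A}=\emptyset$, and subsemigroups are non-empty by definition.) So your instinct to settle the convention before writing the proof is exactly right: the theorem is true only under the additional assumption that $\mu_A(x)=0$ forces $\nu_A(x)=1$, and with that convention both your argument and the paper's close up, since every $x$ then either satisfies both defining inequalities vacuously or carries the canonical point $x_{(\mu_A(x),\nu_A(x))}\in\underline{A}$.
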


\begin{proof}
$(i)\Rightarrow (ii):$ Let $A=(\mu_{A},\nu_{A})$ be an intuitionistic fuzzy subsemigroup of $S.$ Let $x_{(\alpha,\beta)},y_{(\gamma,\delta)}\in\underline{A}.$ Then $\mu_{A}(x)\geq\alpha>0,$ $\mu_{A}(y)\geq\gamma>0$ and $\nu_{A}(x)\leq\beta<1,$ $\nu_{A}(y)\leq\delta<1.$ Since $\mu_{A}(xy)\geq\min\{\mu_{A}(x),\mu_{A}(y)\}\geq\alpha\wedge\gamma$ and $\nu_{A}(xy)\leq\max\{\nu_{A}(x),\nu_{A}(y)\}\leq\beta\vee\delta.$ Consequently, $x_{(\alpha,\beta)}\circ y_{(\gamma,\delta)}=(xy)_{(\alpha\wedge\gamma,\beta\vee\delta)}\in\underline{A}.$ This implies that $\underline{A}^{2}\subseteq\underline{A}.$ Hence $\underline{A}$ is a subsemigroup of $\underline{S}.$

$(ii)\Rightarrow (i):$ Let us suppose that $\underline{A}$ is a subsemigroup of $\underline{S}.$ Let $x,y\in S.$ If $\mu_{A}(x)=\mu_{A}(y)=0$ and $\nu_{A}(x)=\nu_{A}(y)=1,$ then $\min\{\mu_{A}(x),\mu_{A}(y)\}=0\leq\mu_{A}(xy)$ and $\max\{\nu_{A}(x),\nu_{A}(y)\}=1\geq\nu_{A}(xy).$ If $\mu_{A}(x)=\mu_{A}(y)\neq 0$ and $\nu_{A}(x)=\nu_{A}(y)<1,$ then $x_{(\mu_{A}(x),\nu_{A}(x))},y_{(\mu_{A}(y),\nu_{A}(y))}\in\underline{A}.$ Since $\underline{A}$ is a subsemigroup of $\underline{S},$ so we have $(xy)_{(\mu_{A}(x)\wedge\mu_{A}(y),\nu_{A}(x)\vee\nu_{A}(y))}=x_{(\mu_{A}(x),\nu_{A}(x))}\circ y_{(\mu_{A}(y),\nu_{A}(y))}\in \underline{A}$ which implies that $\mu_{A}(xy)\\\geq\mu_{A}(x)\wedge\mu_{A}(y)=\min\{\mu_{A}(x),\mu_{A}(y)\}$ and $\nu_{A}(xy)\leq\nu_{A}(x)\vee\nu_{A}(y)=\max\{\nu_{A}(x),\nu_{A}(y)\}.$ Hence $A=(\mu_{A},\nu_{A})$ be an intuitionistic fuzzy subsemigroup of $S.$
\end{proof}

\begin{theorem}
Let $A=(\mu_{A},\nu_{A})$ be a non-empty intuitionistic fuzzy subset of a semigroup $S.$ Then following conditions are equivalent:

$(i)$ $A=(\mu_{A},\nu_{A})$ is an intuitionistic fuzzy bi-ideal of $S,$

$(ii)$ $\underline{A}$ is a bi-ideal of $\underline{S}.$
\end{theorem}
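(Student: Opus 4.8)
The plan is to mirror the structure of the preceding theorem (the subsemigroup characterization), since a bi-ideal is just a subsemigroup carrying an extra sandwich condition. Throughout I would freely invoke the already-proved equivalence ``$A$ is an intuitionistic fuzzy subsemigroup $\iff$ $\underline{A}$ is a subsemigroup of $\underline{S}$'', together with the product formula $x_{(\alpha,\beta)}\circ y_{(\gamma,\delta)}\circ z_{(\eta,\theta)}=(xyz)_{(\alpha\wedge\gamma\wedge\eta,\ \beta\vee\delta\vee\theta)}$.

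For $(i)\Rightarrow(ii)$: assume $A$ is an intuitionistic fuzzy bi-ideal. Since every intuitionistic fuzzy bi-ideal is in particular an intuitionistic fuzzy subsemigroup, the preceding theorem gives that $\underline{A}$ is a subsemigroup of $\underline{S}$, so it only remains to check $\underline{A}\circ\underline{S}\circ\underline{A}\subseteq\underline{A}$. Taking $x_{(\alpha,\beta)},y_{(\eta,\theta)}\in\underline{A}$ and $\omega_{(\gamma,\delta)}\in\underline{S}$, their product is $(x\omega y)_{(\alpha\wedge\gamma\wedge\eta,\ \beta\vee\delta\vee\theta)}$. Membership in $\underline{A}$ yields $\mu_A(x)\geq\alpha$, $\mu_A(y)\geq\eta$, $\nu_A(x)\leq\beta$, $\nu_A(y)\leq\theta$, whence the bi-ideal inequalities give $\mu_A(x\omega y)\geq\min\{\mu_A(x),\mu_A(y)\}\geq\alpha\wedge\eta\geq\alpha\wedge\gamma\wedge\eta$ and dually $\nu_A(x\omega y)\leq\max\{\nu_A(x),\nu_A(y)\}\leq\beta\vee\theta\leq\beta\vee\delta\vee\theta$, so the product lies in $\underline{A}$.

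For $(ii)\Rightarrow(i)$: assume $\underline{A}$ is a bi-ideal of $\underline{S}$. Being a subsemigroup, the preceding theorem already makes $A$ an intuitionistic fuzzy subsemigroup, so I need only the two sandwich inequalities $\mu_A(x\omega y)\geq\min\{\mu_A(x),\mu_A(y)\}$ and $\nu_A(x\omega y)\leq\max\{\nu_A(x),\nu_A(y)\}$ for all $x,\omega,y\in S$. The idea is to encode $x$ and $y$ as intuitionistic fuzzy points in $\underline{A}$, insert an arbitrary point at $\omega$ from $\underline{S}$, and read off the conclusion from $\underline{A}\circ\underline{S}\circ\underline{A}\subseteq\underline{A}$. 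Concretely, set $\alpha=\min\{\mu_A(x),\mu_A(y)\}$; if $\alpha=0$ the membership inequality is trivial, while if $\alpha>0$ I put $\beta=\max\{\nu_A(x),\nu_A(y)\}$ and observe that the constraint $\mu_A+\nu_A\leq 1$ forces $\beta<1$ and $\alpha+\beta\leq 1$, so that $x_{(\alpha,\beta)},y_{(\alpha,\beta)}\in\underline{A}$ and $\omega_{(\alpha,\beta)}\in\underline{S}$ are legitimate points. Their product $(x\omega y)_{(\alpha,\beta)}$ lies in $\underline{A}$, which reads precisely as $\mu_A(x\omega y)\geq\alpha$ together with $\nu_A(x\omega y)\leq\beta$, giving both desired inequalities at once. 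The only leftover situation is $\min\{\mu_A(x),\mu_A(y)\}=0$ with $\max\{\nu_A(x),\nu_A(y)\}<1$, for which the non-membership inequality must be handled separately.

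I expect the main obstacle to be exactly these boundary cases in $(ii)\Rightarrow(i)$: the definition of an intuitionistic fuzzy point demands a strictly positive membership level $\alpha$, so an element $x$ with $\mu_A(x)=0$ has no representative in $\underline{A}$ and cannot be fed into the product to control $\nu_A(x\omega y)$. In the generic case the membership and non-membership requirements are carried simultaneously by a single pair $(\alpha,\beta)$ as above, but the degenerate cases (a membership value equal to $0$, or a non-membership value equal to $1$) have to be disposed of by hand, exactly as in the proof of the subsemigroup theorem. The rest is a routine transcription of the bi-ideal definition through the point-product formula.
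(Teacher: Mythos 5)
Your direction $(i)\Rightarrow(ii)$ is correct and is essentially the paper's own argument. Your generic case of $(ii)\Rightarrow(i)$ is also correct, and in fact tidier than the paper's: by encoding $x$ and $y$ at the single level pair $\alpha=\min\{\mu_{A}(x),\mu_{A}(y)\}$, $\beta=\max\{\nu_{A}(x),\nu_{A}(y)\}$ (your checks that $\beta<1$ and $\alpha+\beta\leq 1$ are right), you handle arbitrary values, whereas the paper's case analysis literally treats only the subcases $\mu_{A}(x)=\mu_{A}(z)$, $\nu_{A}(x)=\nu_{A}(z)$.

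The genuine gap is exactly the case you flagged and then deferred: $\min\{\mu_{A}(x),\mu_{A}(y)\}=0$ while $\max\{\nu_{A}(x),\nu_{A}(y)\}<1$. This is not a routine boundary case that can be ``disposed of by hand, exactly as in the proof of the subsemigroup theorem'': the paper's Theorem $3.2$ and Theorem $3.3$ proofs do not dispose of it either, they evade it by pairing $\mu_{A}=0$ only with $\nu_{A}=1$, i.e.\ by tacitly assuming that membership $0$ forces non-membership $1$, which the definition of an intuitionistic fuzzy set does not guarantee. In fact the implication $(ii)\Rightarrow(i)$ is \emph{false} in this case, so no argument can close the gap. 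Counterexample: let $S=\{0,a,b\}$ be the null semigroup ($xy=0$ for all $x,y\in S$) and define $\mu_{A}(0)=\mu_{A}(a)=0.3$, $\nu_{A}(0)=\nu_{A}(a)=0.3$, $\mu_{A}(b)=0$, $\nu_{A}(b)=0.1$. Since $\mu_{A}(b)=0$, the element $b$ contributes no points, so $\underline{A}$ consists of the points $0_{(\alpha,\beta)}$ and $a_{(\alpha,\beta)}$ with $0<\alpha\leq 0.3$, $0.3\leq\beta<1$, $\alpha+\beta\leq 1$. Every product of two points of $\underline{A}$, with or without a middle factor from $\underline{S}$, is a point $0_{(\alpha',\beta')}$ whose membership level $\alpha'$ is at most $0.3=\mu_{A}(0)$ and whose non-membership level $\beta'$ is at least $0.3=\nu_{A}(0)$, hence lies in $\underline{A}$; thus $\underline{A}$ is a bi-ideal of $\underline{S}$. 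Yet $\nu_{A}(b\,b\,b)=\nu_{A}(0)=0.3>0.1=\max\{\nu_{A}(b),\nu_{A}(b)\}$ (and likewise $\nu_{A}(bb)>\nu_{A}(b)$), so $A$ is not an intuitionistic fuzzy bi-ideal, indeed not even an intuitionistic fuzzy subsemigroup. So your instinct that this degenerate case is the crux was exactly right, but it can only be repaired by strengthening the hypotheses (e.g.\ requiring $\nu_{A}(x)=1$ whenever $\mu_{A}(x)=0$) or by redefining $\underline{A}$ so that it retains information about $\nu_{A}$ on the set where $\mu_{A}$ vanishes.
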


\begin{proof}
$(i)\Rightarrow (ii):$ Let $A=(\mu_{A},\nu_{A})$ be an intuitionistic fuzzy bi-ideal of $S.$ Then $A=(\mu_{A},\nu_{A})$ is an intuitionistic fuzzy subsemigroup of $S.$ Hence by Theorem $3.2,$ $\underline{A}$ is a subsemigroup of $\underline{S}.$ Let $x_{(\alpha,\beta)},z_{(\eta,\theta)}\in\underline{A}$ and $y_{(\gamma,\delta)}\in\underline{S}.$ Then $\mu_{A}(x)\geq\alpha>0,$ $\mu_{A}(z)\geq\eta>0$ and $\nu_{A}(x)\leq\beta<1,$ $\nu_{A}(z)\leq\theta<1.$ Since $\mu_{A}(xyz)\geq\min\{\mu_{A}(x),\mu_{A}(z)\}\geq\alpha\wedge\gamma\wedge\eta($since $\gamma>0)$ and $\nu_{A}(xyz)\leq\max\{\nu_{A}(x),\nu_{A}(z)\}\leq\beta\vee\delta\vee\theta($since $\delta<1).$ Consequently, $x_{(\alpha,\beta)}\circ y_{(\gamma,\delta)}\circ z_{(\eta,\theta)}=(xyz)_{(\alpha\wedge\gamma\wedge\eta,\beta\vee\delta\vee\theta)}\in\underline{A}.$ This implies that $\underline{A}$ $\underline{S}$ $\underline{A}\subseteq\underline{A}.$ Hence $\underline{A}$ is a bi-ideal of $\underline{S}.$

$(ii)\Rightarrow (i):$ Let us suppose that $\underline{A}$ is a bi-ideal of $\underline{S}.$ Then $\underline{A}$ is a subsemigroup of $\underline{S}.$ Hence by Theorem $3.2,$ $A=(\mu_{A},\nu_{A})$ be an intuitionistic fuzzy subsemigroup of $S.$ Let $x,y,z\in S.$ If $\mu_{A}(x)=\mu_{A}(z)=0$ and $\nu_{A}(x)=\nu_{A}(z)=1,$ then $\min\{\mu_{A}(x),\mu_{A}(z)\}=0\leq\mu_{A}(xyz)$ and $\max\{\nu_{A}(x),\nu_{A}(z)\}=1\geq\nu_{A}(xyz).$ If $\mu_{A}(x)=\mu_{A}(z)\neq 0$ and $\nu_{A}(x)=\nu_{A}(z)<1,$ then $x_{(\mu_{A}(x),\nu_{A}(x))},z_{(\mu_{A}(z),\nu_{A}(z))}\in\underline{A}.$ Since $\underline{A}$ is a bi-ideal of $\underline{S},$ so we have $(xyz)_{(\mu_{A}(x)\wedge\mu_{A}(z),\nu_{A}(x)\vee\nu_{A}(z))}=(xyz)_{(\mu_{A}(x)\wedge\mu_{A}(x)\wedge\mu_{A}(z),\nu_{A}(x)\vee\nu_{A}(x)\vee\nu_{A}(z))}=x_{(\mu_{A}(x),\nu_{A}(x))}\circ y_{(\mu_{A}(x),\nu_{A}(x))}\circ z_{(\mu_{A}(z),\nu_{A}(z))}\in \underline{A}$ which implies that $\mu_{A}(xyz)\geq\mu_{A}(x)\wedge\mu_{A}(z)=\min\{\mu_{A}(x),\mu_{A}(z)\}$ and $\nu_{A}(xyz)\leq\nu_{A}(x)\vee\nu_{A}(z)=\max\{\nu_{A}(x),\nu_{A}(z)\}.$ Hence $A=(\mu_{A},\nu_{A})$ be an intuitionistic fuzzy bi-ideal of $S.$
\end{proof}

\begin{theorem}
Let $A=(\mu_{A},\nu_{A})$ be a non-empty intuitionistic fuzzy subset of a semigroup $S.$ Then following conditions are equivalent:

$(i)$ $A=(\mu_{A},\nu_{A})$ is an intuitionistic fuzzy interior ideal of $S,$

$(ii)$ $\underline{A}$ is an interior ideal of $\underline{S}.$
\end{theorem}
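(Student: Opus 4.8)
The plan is to follow the template of Theorem 3.3 almost verbatim, since an interior ideal of $\underline{S}$ is just a subsemigroup $\underline{A}$ satisfying $\underline{S}\,\underline{A}\,\underline{S}\subseteq\underline{A}$, while the defining inequalities of an intuitionistic fuzzy interior ideal constrain only the middle argument. In both directions the subsemigroup half of the claim comes for free: an intuitionistic fuzzy interior ideal is by definition an intuitionistic fuzzy subsemigroup, and an interior ideal of $\underline{S}$ is by definition a subsemigroup of $\underline{S}$, so Theorem 3.2 transfers that part of the equivalence at once. Thus the real content is to match the set-theoretic containment $\underline{S}\,\underline{A}\,\underline{S}\subseteq\underline{A}$ against the two pointwise inequalities $\mu_A(xay)\ge\mu_A(a)$ and $\nu_A(xay)\le\nu_A(a)$.

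For $(i)\Rightarrow(ii)$ I would take $a_{(\alpha,\beta)}\in\underline{A}$ and arbitrary $x_{(\gamma,\delta)},y_{(\eta,\theta)}\in\underline{S}$, so that $\mu_A(a)\ge\alpha>0$ and $\nu_A(a)\le\beta<1$. Applying the interior-ideal inequalities gives $\mu_A(xay)\ge\mu_A(a)\ge\alpha\ge\alpha\wedge\gamma\wedge\eta$ and $\nu_A(xay)\le\nu_A(a)\le\beta\le\beta\vee\delta\vee\theta$, where the positivity $\gamma,\eta>0$ and the strict bounds $\delta,\theta<1$ are exactly what allow the extra meet and join to be absorbed. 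Hence $x_{(\gamma,\delta)}\circ a_{(\alpha,\beta)}\circ y_{(\eta,\theta)}=(xay)_{(\gamma\wedge\alpha\wedge\eta,\,\delta\vee\beta\vee\theta)}\in\underline{A}$, which is precisely $\underline{S}\,\underline{A}\,\underline{S}\subseteq\underline{A}$.

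For $(ii)\Rightarrow(i)$ Theorem 3.2 already supplies that $A$ is an intuitionistic fuzzy subsemigroup, so I only need to recover the two interior-ideal inequalities. Fix $x,a,y\in S$. If $\mu_A(a)=0$ and $\nu_A(a)=1$ the inequalities hold trivially. Otherwise $a_{(\mu_A(a),\nu_A(a))}$ is a genuine intuitionistic fuzzy point lying in $\underline{A}$, and I would flank it by the points $x_{(\mu_A(a),\nu_A(a))}$ and $y_{(\mu_A(a),\nu_A(a))}$ of $\underline{S}$. Their product is $(xay)_{(\mu_A(a),\nu_A(a))}$, which belongs to $\underline{A}$ because $\underline{A}$ is an interior ideal; reading off the defining condition of $\underline{A}$ then yields $\mu_A(xay)\ge\mu_A(a)$ and $\nu_A(xay)\le\nu_A(a)$, as required.

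The only genuinely delicate point, and the step I would watch most carefully, is the boundary/degenerate case in $(ii)\Rightarrow(i)$: one must separate off the situation where $a$ carries no membership, so that no legitimate point sits over it, and then verify that the flanking points chosen in the remaining case are honestly elements of $\underline{S}$, i.e. their components satisfy $\alpha+\beta\le1$, $\alpha>0$, $\beta<1$. Choosing the flanking points to carry exactly $a$'s own membership data keeps the product's label equal to $(\mu_A(a),\nu_A(a))$ and avoids any loss in the meet and join, which makes the final read-off immediate.
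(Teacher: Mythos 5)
Your proposal is correct and follows essentially the same route as the paper's own proof: Theorem 3.2 transfers the subsemigroup half in both directions, the forward direction verifies $\underline{S}\,\underline{A}\,\underline{S}\subseteq\underline{A}$ directly by absorbing the outer indices into the meet and join, and the converse flanks $a_{(\mu_{A}(a),\nu_{A}(a))}$ with points of $\underline{S}$ carrying those same indices so that membership of the product in $\underline{A}$ reads off $\mu_{A}(xay)\geq\mu_{A}(a)$ and $\nu_{A}(xay)\leq\nu_{A}(a)$. Even the degenerate-case split you flag as delicate ($\mu_{A}(a)=0$ and $\nu_{A}(a)=1$ versus the rest) is exactly the split the paper makes, so the two arguments coincide in every essential detail.
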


\begin{proof}
$(i)\Rightarrow (ii):$ Let $A=(\mu_{A},\nu_{A})$ be an intuitionistic fuzzy interior ideal of $S.$ Then $A=(\mu_{A},\nu_{A})$ is an intuitionistic fuzzy subsemigroup of $S.$ Hence by Theorem $3.2,$ $\underline{A}$ is a subsemigroup of $\underline{S}.$ Let $x_{(\alpha,\beta)},z_{(\eta,\theta)}\in\underline{S}$ and $y_{(\gamma,\delta)}\in\underline{A}.$ Then $\mu_{A}(y)\geq\gamma>0$ and $\nu_{A}(y)\leq\delta<1.$ Since $\mu_{A}(xyz)\geq\mu_{A}(y)\geq\alpha\wedge\gamma\wedge\eta($since $\alpha,\eta>0)$ and $\nu_{A}(xyz)\leq\nu_{A}(y)\leq\beta\vee\delta\vee\theta($since $\beta,\theta<1).$ Consequently, $x_{(\alpha,\beta)}\circ y_{(\gamma,\delta)}\circ z_{(\eta,\theta)}=(xyz)_{(\alpha\wedge\gamma\wedge\eta,\beta\vee\delta\vee\theta)}\in\underline{A}.$ This implies that $\underline{S}$ $\underline{A}$ $\underline{S}\subseteq\underline{A}.$ Hence $\underline{A}$ is an interior ideal of $\underline{S}.$

$(ii)\Rightarrow (i):$ Let us suppose that $\underline{A}$ is an interior ideal of $\underline{S}.$ Then $\underline{A}$ is a subsemigroup of $\underline{S}.$ Hence by Theorem $3.2,$ $A=(\mu_{A},\nu_{A})$ be an intuitionistic fuzzy subsemigroup of $S.$ Let $x,y,z\in S.$ If $\mu_{A}(y)=0$ and $\nu_{A}(y)=1,$ then $\mu_{A}(y)=0\leq\mu_{A}(xyz)$ and $\nu_{A}(y)=1\geq\nu_{A}(xyz).$ If $\mu_{A}(y)\neq 0$ and $\nu_{A}(y)<1,$ then $y_{(\mu_{A}(y),\nu_{A}(y))}\in\underline{A}.$ Since $\underline{A}$ is an interior ideal of $\underline{S},$ so we have $(xyz)_{(\mu_{A}(y),\nu_{A}(y))}=(xyz)_{(\mu_{A}(y)\wedge\mu_{A}(y)\wedge\mu_{A}(y),\nu_{A}(y)\vee\nu_{A}(y)\vee\nu_{A}(y))}=x_{(\mu_{A}(y),\nu_{A}(y))}\circ y_{(\mu_{A}(y),\nu_{A}(y))}\circ z_{(\mu_{A}(y),\nu_{A}(y))}\in \underline{A}$ which implies that $\mu_{A}(xyz)\geq\mu_{A}(y)$ and $\nu_{A}(xyz)\leq\nu_{A}(y).$ Hence $A=(\mu_{A},\nu_{A})$ be an intuitionistic fuzzy interior ideal of $S.$
\end{proof}

\begin{theorem}
Let $A=(\mu_{A},\nu_{A})$ be a non-empty intuitionistic fuzzy subset of a semigroup $S.$ Then following conditions are equivalent:

$(i)$ $A=(\mu_{A},\nu_{A})$ is an intuitionistic fuzzy $($right, left$)$ ideal of $S,$

$(ii)$ $\underline{A}$ is a right$($left$)$ ideal of $\underline{S}.$
\end{theorem}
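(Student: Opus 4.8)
The plan is to mimic the template already established for Theorems 3.2--3.4, proving the left-ideal equivalence explicitly and remarking that the right-ideal version is obtained by the obvious symmetry (replace $\mu_A(y),\nu_A(y)$ by $\mu_A(x),\nu_A(x)$ throughout). A point worth noting at the outset is that, unlike the bi-ideal and interior-ideal cases, a left ideal of $\underline{S}$ is by Definition 2.5 merely a non-empty subset $\underline{A}$ with $\underline{S}\,\underline{A}\subseteq\underline{A}$; no subsemigroup hypothesis is built in, so Theorem 3.2 need not be invoked here and the argument is correspondingly shorter.

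For $(i)\Rightarrow(ii)$, I would take an arbitrary $y_{(\gamma,\delta)}\in\underline{A}$ and an arbitrary $x_{(\alpha,\beta)}\in\underline{S}$ and form the product $x_{(\alpha,\beta)}\circ y_{(\gamma,\delta)}=(xy)_{(\alpha\wedge\gamma,\beta\vee\delta)}$. Membership of $y_{(\gamma,\delta)}$ in $\underline{A}$ gives $\mu_A(y)\geq\gamma$ and $\nu_A(y)\leq\delta$, and the left-ideal inequalities for $A$ then yield $\mu_A(xy)\geq\mu_A(y)\geq\gamma\geq\alpha\wedge\gamma$ together with $\nu_A(xy)\leq\nu_A(y)\leq\delta\leq\beta\vee\delta$. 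Hence $(xy)_{(\alpha\wedge\gamma,\beta\vee\delta)}\in\underline{A}$, so $\underline{S}\,\underline{A}\subseteq\underline{A}$. The key observation is that only the value of $A$ at $y$ is used; the data on $x$ enter solely through $\alpha>0,\beta<1$, which guarantee that the product is a genuine intuitionistic fuzzy point (i.e. $\alpha\wedge\gamma>0$ and $\beta\vee\delta<1$).

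For $(ii)\Rightarrow(i)$, fix $x,y\in S$ and argue by the same dichotomy used in the earlier proofs. If $\mu_A(y)=0$ and $\nu_A(y)=1$, then $\mu_A(xy)\geq0=\mu_A(y)$ and $\nu_A(xy)\leq1=\nu_A(y)$ hold trivially. Otherwise $\mu_A(y)\neq0$ and $\nu_A(y)<1$, so $y_{(\mu_A(y),\nu_A(y))}$ is a genuine element of $\underline{A}$; choosing the witness point $x_{(\mu_A(y),\nu_A(y))}\in\underline{S}$ and using $\underline{S}\,\underline{A}\subseteq\underline{A}$, I get $(xy)_{(\mu_A(y),\nu_A(y))}=x_{(\mu_A(y),\nu_A(y))}\circ y_{(\mu_A(y),\nu_A(y))}\in\underline{A}$, which reads off at once as $\mu_A(xy)\geq\mu_A(y)$ and $\nu_A(xy)\leq\nu_A(y)$. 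Thus $A$ is an intuitionistic fuzzy left ideal.

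The only delicate step, and the one I would check most carefully, is this direction $(ii)\Rightarrow(i)$: one must verify that the chosen witness $x_{(\mu_A(y),\nu_A(y))}$ really lies in $\underline{S}$ (its coordinates are positive, strictly below $1$, and sum to at most $1$ since $A$ is intuitionistic fuzzy) and that the product collapses correctly via $\mu_A(y)\wedge\mu_A(y)=\mu_A(y)$ and $\nu_A(y)\vee\nu_A(y)=\nu_A(y)$. The boundary case $\mu_A(y)=0,\nu_A(y)=1$ is precisely the one that cannot be encoded by a point of $\underline{A}$, and so must be disposed of separately, exactly as in Theorems 3.2--3.4. Everything else is routine, and the right-ideal statement follows by the symmetric argument.
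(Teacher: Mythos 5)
Your argument is, up to swapping left and right (the paper proves the right-ideal case explicitly and disposes of the left-ideal case with ``similarly''), exactly the paper's proof: the same computation for $(i)\Rightarrow(ii)$, and in $(ii)\Rightarrow(i)$ the same two-case analysis with the same witness $x_{(\mu_A(y),\nu_A(y))}\circ y_{(\mu_A(y),\nu_A(y))}\in\underline{A}$. Your observation that no subsemigroup condition needs to be verified is also consistent with the paper, whose proof of this theorem likewise never invokes Theorem 3.2.

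However, the step you yourself flag as delicate contains a genuine gap --- one that you inherit from (and share with) the paper. The two cases ``$\mu_A(y)=0$ and $\nu_A(y)=1$'' and ``$\mu_A(y)\neq 0$ and $\nu_A(y)<1$'' are not exhaustive: since $A$ is intuitionistic we only know $\mu_A(y)+\nu_A(y)\leq 1$, so the case $\mu_A(y)=0$ with $\nu_A(y)<1$ remains. In that case $\mu_A(xy)\geq 0=\mu_A(y)$ is trivial, but $\nu_A(xy)\leq\nu_A(y)$ is not, and it cannot be extracted from $(ii)$: points of $\underline{S}$ must have first coordinate $\alpha>0$, so when $\mu_A(y)=0$ no point based at $y$ lies in $\underline{A}$, and the hypothesis $\underline{S}\,\underline{A}\subseteq\underline{A}$ says nothing about the elements $xy$. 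The implication genuinely fails there: take the null semigroup $S=\{a,z\}$ (every product equals $z$) and define $\mu_A(z)=\nu_A(z)=0.5$, $\mu_A(a)=0$, $\nu_A(a)=0.2$. Then $\underline{A}=\{z_{(\alpha,\beta)}:0<\alpha\leq 0.5,\ 0.5\leq\beta<1,\ \alpha+\beta\leq 1\}$ is a left (and right) ideal of $\underline{S}$, yet $\nu_A(xa)=\nu_A(z)=0.5>0.2=\nu_A(a)$, so $A$ is not an intuitionistic fuzzy left ideal. So as a reconstruction of the paper's argument your proposal is faithful, but to make the proof (and the theorem) complete one must either add a hypothesis covering this boundary case --- for instance that $\mu_A(x)=0$ forces $\nu_A(x)=1$ --- or modify the definition of $\underline{A}$ so that such elements are visible to it.
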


\begin{proof}
$(i)\Rightarrow (ii):$ Let $A=(\mu_{A},\nu_{A})$ be an intuitionistic fuzzy right ideal of $S.$ Let $x_{(\alpha,\beta)}\in\underline{A}$ and $y_{(\gamma,\delta)}\in\underline{S}.$ Then $\mu_{A}(x)\geq\alpha>0$ and $\nu_{A}(x)\leq\beta<1.$ Since $\mu_{A}(xy)\geq\mu_{A}(x)\geq\alpha\wedge\gamma($since $\gamma>0)$ and $\nu_{A}(xy)\leq\nu_{A}(x)\leq\beta\vee\delta($since $\delta<1).$ Consequently, $x_{(\alpha,\beta)}\circ y_{(\gamma,\delta)}=(xy)_{(\alpha\wedge\gamma,\beta\vee\delta)}\in\underline{A}.$ This implies that $\underline{A}$ $\underline{S}$ $\subseteq\underline{A}.$ Hence $\underline{A}$ is a right ideal of $\underline{S}.$

$(ii)\Rightarrow (i):$ Let us suppose that $\underline{A}$ is a right ideal of $\underline{S}.$ Let $x,y\in S.$ If $\mu_{A}(x)=0$ and $\nu_{A}(x)=1,$ then $\mu_{A}(x)=0\leq\mu_{A}(xy)$ and $\nu_{A}(x)=1\geq\nu_{A}(xy).$ If $\mu_{A}(x)\neq 0$ and $\nu_{A}(x)<1,$ then $x_{(\mu_{A}(x),\nu_{A}(x))}\in\underline{A}.$ Since $\underline{A}$ is a right ideal of $\underline{S},$ so we have $(xy)_{(\mu_{A}(x)\wedge\mu_{A}(x),\nu_{A}(x)\vee\nu_{A}(x))}=x_{(\mu_{A}(x),\nu_{A}(x))}\circ y_{(\mu_{A}(x),\nu_{A}(x))}\in \underline{A}$ which implies that $\mu_{A}(xy)\geq\mu_{A}(x)$ and $\nu_{A}(xy)\leq\nu_{A}(x).$ Hence $A=(\mu_{A},\nu_{A})$ be an intuitionistic fuzzy right ideal of $S.$ Similarly we can prove the theorem for left ideal and ideal.
\end{proof}

\begin{remark}
It is clear that any ideal of a semigroup $S$ is an interior ideal of $S.$ It is also clear that any intuitionistic fuzzy ideal of a semigroup $S$ is an intuitionistic fuzzy interior ideal of $S.$
\end{remark}

\begin{definition}
$\cite{K2}$ A semigroup $S$ is called regular if, for each element $a$ of $S,$ there exists an element $x\in S$ such that $a=axa.$ A semigroup $S$ is called intra-regular if, for each element $x\in S,$ there exist elements $a,b\in S$ such that $x=ax^{2}b.$
\end{definition}

\begin{theorem}
Let $A=(\mu_{A},\nu_{A})$ be a non-empty intuitionistic fuzzy subset of a regular semigroup $S.$ Then following conditions are equivalent:

$(i)$ $A=(\mu_{A},\nu_{A})$ is an intuitionistic fuzzy $($right, left$)$ ideal of $S,$

$(ii)$ $\underline{A}$ is an interior ideal of $\underline{S}.$
\end{theorem}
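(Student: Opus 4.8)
The plan is to reduce the whole statement to Theorem 3.4, which already establishes that $\underline{A}$ is an interior ideal of $\underline{S}$ if and only if $A=(\mu_A,\nu_A)$ is an intuitionistic fuzzy interior ideal of $S$. With that in hand, condition $(ii)$ is simply a restatement of ``$A$ is an intuitionistic fuzzy interior ideal,'' so the real content of the theorem is the assertion that, in a \emph{regular} semigroup, an intuitionistic fuzzy interior ideal and an intuitionistic fuzzy (two-sided) ideal are the same thing. The implication from ideal to interior ideal is Remark 3.6 and needs no hypothesis on $S$; regularity will be used only for the converse.

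For $(i)\Rightarrow(ii)$ I would argue directly: if $A$ is an intuitionistic fuzzy ideal of $S$, then by Remark 3.6 it is an intuitionistic fuzzy interior ideal, whence Theorem 3.4 gives that $\underline{A}$ is an interior ideal of $\underline{S}$. For $(ii)\Rightarrow(i)$, Theorem 3.4 first yields that $A$ is an intuitionistic fuzzy interior ideal, i.e.\ $\mu_A(s t u)\geq\mu_A(t)$ and $\nu_A(s t u)\leq\nu_A(t)$ for all $s,t,u\in S$ (Definition 2.10). I then fix $x,y\in S$ and invoke regularity to pick $a,b\in S$ with $x=xax$ and $y=yby$. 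Writing $xy=(xa)\,x\,y$ places $x$ in the middle slot of a triple product, so the interior-ideal inequalities give $\mu_A(xy)\geq\mu_A(x)$ and $\nu_A(xy)\leq\nu_A(x)$; writing $xy=x\,y\,(by)$ places $y$ in the middle slot and gives $\mu_A(xy)\geq\mu_A(y)$ and $\nu_A(xy)\leq\nu_A(y)$. Combining these yields $\mu_A(xy)\geq\max\{\mu_A(x),\mu_A(y)\}$ and $\nu_A(xy)\leq\min\{\nu_A(x),\nu_A(y)\}$, which is exactly the defining condition (Definition 2.14) for $A$ to be an intuitionistic fuzzy ideal.

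The main obstacle is entirely in the reverse direction, and it is the algebraic step of rewriting $xy$ so that precisely one of $x,y$ occupies the interior position of a triple product while the product value is unchanged. The regularity relation $x=xax$ is what lets $xa$ be absorbed as a harmless left factor (and dually $by$ as a right factor); this is the only point where regularity of $S$ is used, and it is precisely the property that fails in a general semigroup, where interior ideals may strictly contain the two-sided ideals. Everything else is bookkeeping that follows once Theorem 3.4 is applied at both ends.
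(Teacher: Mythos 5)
Your proposal is correct, and while it shares the paper's key algebraic trick, it packages the converse direction differently. The paper's proof of $(ii)\Rightarrow(i)$ stays at the level of intuitionistic fuzzy points: it splits into the degenerate case $\mu_{A}(x)=0,\ \nu_{A}(x)=1$ and the case where the point $x_{(\mu_{A}(x),\nu_{A}(x))}$ exists, then uses regularity to factor $(xy)_{(\mu_{A}(x),\nu_{A}(x))}=(xa)_{(\mu_{A}(x),\nu_{A}(x))}\circ x_{(\mu_{A}(x),\nu_{A}(x))}\circ y_{(\mu_{A}(x),\nu_{A}(x))}\in\underline{A}$, obtaining only the right-ideal inequality and declaring the left and two-sided cases ``similar.'' You instead apply Theorem 3.4 at the outset to translate $(ii)$ into the purely fuzzy-level statement that $A$ is an intuitionistic fuzzy interior ideal, and then run the regularity trick on membership values: $xy=(xa)xy$ gives $\mu_{A}(xy)\geq\mu_{A}(x)$ and $xy=xy(by)$ gives $\mu_{A}(xy)\geq\mu_{A}(y)$, dually for $\nu_{A}$. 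The underlying idea---using $x=xax$ (and $y=yby$) to place an element in the interior slot of a triple product---is the same, but your factorization buys two things: the case analysis on vanishing membership disappears (Theorem 3.4 has already absorbed it, since points require $\alpha>0$, $\beta<1$), and you land directly on the $\max/\min$ condition of Definition 2.14, so the right, left, and two-sided readings of $(i)$ all follow in one stroke instead of three parallel arguments. One caveat you share with the paper: in $(i)\Rightarrow(ii)$ both of you rely on the remark that a \emph{two-sided} intuitionistic fuzzy ideal is an interior ideal, which does not cover the ``right (left) ideal'' readings of $(i)$; indeed in a left-zero semigroup (which is regular) the characteristic pair of a singleton is an intuitionistic fuzzy right ideal whose $\underline{A}$ fails to be an interior ideal of $\underline{S}$. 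That is a defect of the theorem as stated, not of your argument relative to the paper's.
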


\begin{proof}
$(i)\Rightarrow (ii):$ Follows easily from Remark $1$ and Theorem $3.4.$

$(ii)\Rightarrow (i):$ Let us suppose that $\underline{A}$ is an interior ideal of $\underline{S}.$ Let $x\in S.$ Then there exists an element $a\in S$ such that $x=xax($since $S$ is regular$).$ If $\mu_{A}(x)=0$ and $\nu_{A}(x)=1,$ then $\mu_{A}(x)=0\leq\mu_{A}(xy)$ and $\nu_{A}(x)=1\geq\nu_{A}(xy).$ If $\mu_{A}(x)\neq 0$ and $\nu_{A}(x)<1,$ then $x_{(\mu_{A}(x),\nu_{A}(x))}\in\underline{A}$ and $y_{(\mu_{A}(x),\nu_{A}(x))}\in\underline{S}.$ Since $\underline{A}$ is an interior ideal of $\underline{S},$ so we have $(xy)_{(\mu_{A}(x),\nu_{A}(x))}=(xaxy)_{(\mu_{A}(x),\nu_{A}(x))}=((xa)xy)_{(\mu_{A}(x)\wedge\mu_{A}(x)\wedge\mu_{A}(x),\nu_{A}(x)\vee\nu_{A}(x)\vee\nu_{A}(x))}=(xa)_{(\mu_{A}(x),\nu_{A}(x))}\circ x_{(\mu_{A}(x),\nu_{A}(x))}\circ y_{(\mu_{A}(x),\nu_{A}(x))}\in \underline{A}$ which implies that $\mu_{A}(xy)\geq\mu_{A}(x)$ and $\nu_{A}(xy)\leq\nu_{A}(x).$ Hence $A=(\mu_{A},\nu_{A})$ be an intuitionistic fuzzy right ideal of $S.$ The proof is similar in case of intuitionistic fuzzy left ideal and intuitionistic fuzzy ideal.
\end{proof}

\begin{theorem} Let $S$ be a semigroup. If for fixed $\alpha,\beta\in [0,1]$ with $\alpha+\beta\leq 1,$ $f_{(\alpha,\beta)}:S\rightarrow \underline{S}$ is a function defined by $f_{(\alpha,\beta)}(x)=x_{(\alpha,\beta)},$ then $f_{(\alpha,\beta)}$ is an injective homomorphism.
\end{theorem}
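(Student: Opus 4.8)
The plan is to verify the two required properties separately, both of which follow directly from the multiplication rule for intuitionistic fuzzy points recorded just before Proposition~3.1, namely $x_{(\alpha,\beta)}\circ y_{(\gamma,\delta)}=(xy)_{(\alpha\wedge\gamma,\beta\vee\delta)}$.

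First I would establish that $f_{(\alpha,\beta)}$ is a homomorphism. For arbitrary $x,y\in S$ we have $f_{(\alpha,\beta)}(xy)=(xy)_{(\alpha,\beta)}$ by the definition of $f_{(\alpha,\beta)}$. On the other hand, applying the product rule with the second point also carrying the labels $(\alpha,\beta)$ gives $f_{(\alpha,\beta)}(x)\circ f_{(\alpha,\beta)}(y)=x_{(\alpha,\beta)}\circ y_{(\alpha,\beta)}=(xy)_{(\alpha\wedge\alpha,\,\beta\vee\beta)}=(xy)_{(\alpha,\beta)}$, since $\alpha\wedge\alpha=\alpha$ and $\beta\vee\beta=\beta$. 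Comparing the two expressions yields $f_{(\alpha,\beta)}(xy)=f_{(\alpha,\beta)}(x)\circ f_{(\alpha,\beta)}(y)$, so $f_{(\alpha,\beta)}$ respects the operation.

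Next I would prove injectivity. Suppose $f_{(\alpha,\beta)}(x)=f_{(\alpha,\beta)}(y)$, that is, $x_{(\alpha,\beta)}=y_{(\alpha,\beta)}$ as intuitionistic fuzzy subsets of $S$. Evaluating both sides at the element $x$ and invoking the defining formula for an intuitionistic fuzzy point (Definition~2.8), the left-hand side returns the pair $(\alpha,\beta)$, while the right-hand side returns $(\alpha,\beta)$ if $x=y$ and $(0,1)$ otherwise. Because every intuitionistic fuzzy point in $\underline{S}$ satisfies $\alpha>0$ and $\beta<1$ (as noted immediately before Proposition~3.1), we have $(\alpha,\beta)\neq(0,1)$, so the only consistent possibility is $x=y$. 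Hence $f_{(\alpha,\beta)}$ is injective, and together with the first part this shows $f_{(\alpha,\beta)}$ is an injective homomorphism.

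I do not anticipate a genuine obstacle, since both parts reduce to the point-multiplication formula and the case-distinction built into the definition of an intuitionistic fuzzy point. The one place demanding care is injectivity, where the argument hinges on $(\alpha,\beta)\neq(0,1)$; this is precisely why the standing convention $\alpha>0,\ \beta<1$ for the points of $\underline{S}$ is essential. Without it, at the degenerate choice $\alpha=0,\ \beta=1$ the map would send every $x\in S$ to the single constant subset taking value $(0,1)$ everywhere, and injectivity would fail.
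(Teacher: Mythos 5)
Your proof is correct and takes essentially the same approach as the paper: the homomorphism property follows from the product formula $x_{(\alpha,\beta)}\circ y_{(\alpha,\beta)}=(xy)_{(\alpha\wedge\alpha,\,\beta\vee\beta)}=(xy)_{(\alpha,\beta)}$, and injectivity from the equality of point maps. The only difference is that you justify the step $x_{(\alpha,\beta)}=y_{(\alpha,\beta)}\Rightarrow x=y$ (by evaluating at $x$ and using $(\alpha,\beta)\neq(0,1)$), which the paper simply asserts; this is a welcome added detail rather than a different route.
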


\begin{proof}
\underline{To show $f_{(\alpha,\beta)}$ is a homomorphism:}
Let $x,y\in S.$ Then%
\begin{align*}
f_{(\alpha,\beta)}(xy) & =(xy)_{(\alpha,\beta)}=(xy)_{(\alpha\wedge\alpha,\beta\vee\beta)}=x_{(\alpha,\beta)}\circ y_{(\alpha,\beta)}=f_{(\alpha,\beta)}(x)f_{(\alpha,\beta)}(y).
\end{align*}
Hence $f_{(\alpha,\beta)}$ is an homomorphism.\\

\underline{To show $f_{(\alpha,\beta)}$ is injective:}
Let $x_{1},x_{2}\in S.$ Then $f_{(\alpha,\beta)}(x_{1}) =f_{(\alpha,\beta)}(x_{2})\Rightarrow (x_{1})_{(\alpha,\beta)} =(x_{2})_{(\alpha,\beta)}\Rightarrow x_{1}=x_{2}.$

Hence $f_{(\alpha,\beta)}$ is an injective homomorphism.
\end{proof}

\begin{theorem}
Let $A=(\mu_{A},\nu_{A})$ be a non-empty intuitionistic fuzzy subset of an intra-regular semigroup $S.$ Then following conditions are equivalent:

$(i)$ $A=(\mu_{A},\nu_{A})$ is an intuitionistic fuzzy $($right, left$)$ ideal of $S,$

$(ii)$ $\underline{A}$ is an interior ideal of $\underline{S}.$
\end{theorem}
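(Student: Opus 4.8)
The plan is to follow the template of Theorem $3.6$ for the regular case, with the regularity identity $x=xax$ replaced by the intra-regularity identity $x=ax^{2}b$. The two implications are treated separately, and essentially all of the work lies in $(ii)\Rightarrow (i)$.

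For $(i)\Rightarrow (ii)$ I would reduce to the interior-ideal characterization already available. By Remark $1$ every intuitionistic fuzzy ideal of $S$ is an intuitionistic fuzzy interior ideal of $S$, and by Theorem $3.4$ the latter holds precisely when $\underline{A}$ is an interior ideal of $\underline{S}$; composing these two facts gives the claim at once, exactly as recorded in Theorem $3.6$. This direction uses no intra-regularity and is routine.

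The content is in $(ii)\Rightarrow (i)$. Assume $\underline{A}$ is an interior ideal of $\underline{S}$; by Theorem $3.4$ this says $A=(\mu_{A},\nu_{A})$ is an intuitionistic fuzzy interior ideal, so $\mu_{A}(uwv)\geq\mu_{A}(w)$ and $\nu_{A}(uwv)\leq\nu_{A}(w)$ for all $u,w,v\in S$. To prove $A$ is an intuitionistic fuzzy right ideal I must establish $\mu_{A}(xy)\geq\mu_{A}(x)$ and $\nu_{A}(xy)\leq\nu_{A}(x)$ for all $x,y\in S$. The key step is to invoke intra-regularity of $x$: pick $a,b\in S$ with $x=ax^{2}b$ and rewrite $xy=(ax^{2}b)y=(ax)\,x\,(by)$, which displays $x$ as the interior factor of a triple product. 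After disposing of the trivial case $\mu_{A}(x)=0,\ \nu_{A}(x)=1$ separately, I use $x_{(\mu_{A}(x),\nu_{A}(x))}\in\underline{A}$ together with $(ax)_{(\mu_{A}(x),\nu_{A}(x))},(by)_{(\mu_{A}(x),\nu_{A}(x))}\in\underline{S}$, so that the interior-ideal property of $\underline{A}$ yields $(xy)_{(\mu_{A}(x),\nu_{A}(x))}=(ax)_{(\mu_{A}(x),\nu_{A}(x))}\circ x_{(\mu_{A}(x),\nu_{A}(x))}\circ (by)_{(\mu_{A}(x),\nu_{A}(x))}\in\underline{A}$, whence $\mu_{A}(xy)\geq\mu_{A}(x)$ and $\nu_{A}(xy)\leq\nu_{A}(x)$.

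For the left-ideal case I would apply intra-regularity symmetrically to $y$, writing $y=ay^{2}b$ and hence $xy=(xay)\,y\,b$, which now places $y$ in the interior position; the same argument gives $\mu_{A}(xy)\geq\mu_{A}(y)$ and $\nu_{A}(xy)\leq\nu_{A}(y)$, and the two-sided case follows by combining the two. The main obstacle is simply choosing the correct intra-regular factorization so that the target element ($x$ for the right ideal, $y$ for the left ideal) becomes the middle factor of a product of three elements; once this bookkeeping is arranged, the interior-ideal hypothesis does the rest and the remaining computation is routine, closely paralleling Theorem $3.6$.
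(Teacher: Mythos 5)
Your proposal reproduces the paper's own proof essentially verbatim: the paper likewise settles $(i)\Rightarrow(ii)$ by citing Remark $1$ together with Theorem $3.4$, and proves $(ii)\Rightarrow(i)$ by writing $xy=(ax^{2}b)y=(ax)\,x\,(by)$ and feeding the points $(ax)_{(\mu_{A}(x),\nu_{A}(x))}$, $x_{(\mu_{A}(x),\nu_{A}(x))}$, $(by)_{(\mu_{A}(x),\nu_{A}(x))}$ into the interior-ideal property of $\underline{A}$, with the same two-case split on $(\mu_{A}(x),\nu_{A}(x))$; the only difference is that you write out the left-ideal factorization $xy=(xay)\,y\,b$, which the paper dismisses as ``similar.''

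However, in following the paper you have also reproduced a genuine gap. The case split ``$\mu_{A}(x)=0$ and $\nu_{A}(x)=1$'' versus ``$\mu_{A}(x)\neq 0$ and $\nu_{A}(x)<1$'' is not exhaustive: it omits the case $\mu_{A}(x)=0$ with $\nu_{A}(x)<1$. In that case no intuitionistic fuzzy point based at $x$ lies in $\underline{A}$ at all (membership requires $\mu_{A}(x)\geq\alpha>0$), so the hypothesis on $\underline{A}$ gives no control on $\nu_{A}(xy)$, and the required inequality $\nu_{A}(xy)\leq\nu_{A}(x)$ is never established. This omission cannot be repaired: let $S=\{e,g,0\}$ be the group $\mathbb{Z}_{2}=\{e,g\}$ with a zero $0$ adjoined (an intra-regular semigroup), and set $\mu_{A}(0)=1/2$, $\mu_{A}(e)=\mu_{A}(g)=0$, $\nu_{A}(0)=\nu_{A}(e)=0$, $\nu_{A}(g)=1/2$. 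Then $\underline{A}$ consists precisely of the points $0_{(\alpha,\beta)}$ with $0<\alpha\leq 1/2$, and it is an interior ideal of $\underline{S}$ because every product $s0t$ equals $0$; yet $\nu_{A}(eg)=\nu_{A}(g)=1/2>0=\nu_{A}(e)$, so $A$ is not an intuitionistic fuzzy right (or left, or two-sided) ideal of $S$. Your opening move of invoking Theorem $3.4$ to get $\mu_{A}(uwv)\geq\mu_{A}(w)$ and $\nu_{A}(uwv)\leq\nu_{A}(w)$ for all $u,w,v$ would, if pursued consistently, eliminate the case split and close this hole, but Theorem $3.4$'s own $(ii)\Rightarrow(i)$ proof in the paper contains the identical unhandled case (and the same example refutes it), so the defect is structural rather than local to your write-up.

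A secondary point: Remark $1$ concerns two-sided ideals only, so the citation you (and the paper) use for $(i)\Rightarrow(ii)$ covers only the two-sided version of $(i)$. For one-sided ideals that direction is not merely unproven but false: in the left-zero semigroup $\{l_{1},l_{2}\}$ (which is intra-regular, since $x=x\cdot x^{2}\cdot b$ for any $b$), the characteristic pair of $\{l_{1}\}$ is an intuitionistic fuzzy right ideal, but its set of points is not an interior ideal of $\underline{S}$, since $l_{2}l_{1}t=l_{2}$ carries points of $\underline{A}$ outside $\underline{A}$. So the claim that this direction ``is routine'' should at least be restricted to the two-sided case.
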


\begin{proof}
$(i)\Rightarrow (ii):$ Follows easily from Remark $1$ and Theorem $3.4.$

$(ii)\Rightarrow (i):$ Let us suppose that $\underline{A}$ is an interior ideal of $\underline{S}.$ Let $x,y\in S.$ Then there exist elements $a,b\in S$ such that $x=ax^{2}b($since $S$ is intra-regular$).$ If $\mu_{A}(x)=0$ and $\nu_{A}(x)=1,$ then $\mu_{A}(x)=0\leq\mu_{A}(xy)$ and $\nu_{A}(x)=1\geq\nu_{A}(xy).$ If $\mu_{A}(x)\neq 0$ and $\nu_{A}(x)<1,$ then $x_{(\mu_{A}(x),\nu_{A}(x))}\in\underline{A}$ and $y_{(\mu_{A}(x),\nu_{A}(x))}\in\underline{S}.$ Since $\underline{A}$ is an interior ideal of $\underline{S},$ so we have %
\begin{align*}
(xy)_{(\mu_{A}(x),\nu_{A}(x))}&=(ax^{2}by)_{(\mu_{A}(x),\nu_{A}(x))}\\
&=((ax)x(by))_{(\mu_{A}(x)\wedge\mu_{A}(x)\wedge\mu_{A}(x),\nu_{A}(x)\vee\nu_{A}(x)\vee\nu_{A}(x))}\\
& =(ax)_{(\mu_{A}(x),\nu_{A}(x))}\circ x_{(\mu_{A}(x),\nu_{A}(x))}\circ (by)_{(\mu_{A}(x),\nu_{A}(x))}\in \underline{A}.
\end{align*}
This implies that $\mu_{A}(xy)\geq\mu_{A}(x)$ and $\nu_{A}(xy)\leq\nu_{A}(x).$ Hence $A=(\mu_{A},\nu_{A})$ be an intuitionistic fuzzy right ideal of $S.$ The proof is similar in case of intuitionistic fuzzy left ideal and intuitionistic fuzzy ideal.
\end{proof}

\begin{theorem}
A semigroup $S$ is intra-regular if and only if the semigroup $\underline{S}$ is intra-regular.
\end{theorem}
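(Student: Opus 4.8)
The plan is to prove both implications by direct computation with the product rule $x_{(\alpha,\beta)}\circ y_{(\gamma,\delta)}=(xy)_{(\alpha\wedge\gamma,\beta\vee\delta)}$, exploiting the parameter idempotence it forces, namely $(x_{(\alpha,\beta)})^{2}=x_{(\alpha,\beta)}\circ x_{(\alpha,\beta)}=(x^{2})_{(\alpha,\beta)}$.

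For the forward direction, I would assume $S$ is intra-regular and fix an arbitrary point $x_{(\alpha,\beta)}\in\underline{S}$. Intra-regularity of $S$ yields $a,b\in S$ with $x=ax^{2}b$. I would then form the points $a_{(\alpha,\beta)},b_{(\alpha,\beta)}\in\underline{S}$, which indeed lie in $\underline{S}$ since $\alpha>0$, $\beta<1$ and $\alpha+\beta\le 1$, and compute
$$a_{(\alpha,\beta)}\circ (x_{(\alpha,\beta)})^{2}\circ b_{(\alpha,\beta)}=a_{(\alpha,\beta)}\circ (x^{2})_{(\alpha,\beta)}\circ b_{(\alpha,\beta)}=(ax^{2}b)_{(\alpha\wedge\alpha\wedge\alpha,\;\beta\vee\beta\vee\beta)}=x_{(\alpha,\beta)}.$$
This exhibits the required factorization of $x_{(\alpha,\beta)}$, so $\underline{S}$ is intra-regular.

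For the converse, I would assume $\underline{S}$ is intra-regular and fix $x\in S$. Choosing any admissible pair, say $(\alpha,\beta)=(1,0)$, I would apply intra-regularity of $\underline{S}$ to the point $x_{(\alpha,\beta)}$ to obtain $p,q\in\underline{S}$ with $x_{(\alpha,\beta)}=p\circ(x_{(\alpha,\beta)})^{2}\circ q$. Writing $p=a_{(\gamma,\delta)}$ and $q=b_{(\eta,\theta)}$ and again using $(x_{(\alpha,\beta)})^{2}=(x^{2})_{(\alpha,\beta)}$, the product rule gives
$$x_{(\alpha,\beta)}=(ax^{2}b)_{(\gamma\wedge\alpha\wedge\eta,\;\delta\vee\beta\vee\theta)}.$$
The decisive step is to read off the supports: two intuitionistic fuzzy points can be equal as fuzzy subsets only when their supports coincide, because the support carries the value $(\alpha,\beta)$ with $\alpha>0$ whereas every other element carries $(0,1)$. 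Hence $ax^{2}b=x$ with $a,b\in S$, and $S$ is intra-regular.

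Neither direction is computationally heavy; the single point requiring care is the comparison of supports in the converse, where the convention $\alpha>0$ recorded just after the definition of $\underline{A}$ is exactly what rules out a spurious equality of fuzzy points with different supports. Equivalently, one could phrase the converse by observing that the support map $\underline{S}\to S$, $x_{(\alpha,\beta)}\mapsto x$, is a surjective homomorphism and that intra-regularity is inherited by homomorphic images, which dispenses with tracking the parameters altogether.
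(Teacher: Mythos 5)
Your proof is correct and follows essentially the same route as the paper: the forward direction is the identical computation $a_{(\alpha,\beta)}\circ (x^{2})_{(\alpha,\beta)}\circ b_{(\alpha,\beta)}=(ax^{2}b)_{(\alpha,\beta)}$, and the converse likewise reads off $x=ax^{2}b$ from the equality of fuzzy points. Your explicit justification that equal intuitionistic fuzzy points must have equal supports (using $\alpha>0$) fills in a step the paper merely asserts, and your closing remark about the support map being a surjective homomorphism is a nice, equally valid shortcut.
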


\begin{proof}
Let $a_{(\alpha,\beta)}\in\underline{S}$ and $a\in S.$ Then there exist elements $x,y\in S$ such that $a=xa^{2}y($since $S$ is intra-regular$).$ So $x_{(\alpha,\beta)},y_{(\alpha,\beta)}\in\underline{S}.$ Then%
\begin{align*}
x_{(\alpha,\beta)}\circ a_{(\alpha,\beta)}\circ a_{(\alpha,\beta)}\circ y_{(\alpha,\beta)}& =x_{(\alpha,\beta)}\circ (a^{2})_{(\alpha\wedge\alpha,\beta\vee\beta)}\circ y_{(\alpha,\beta)}\\
& =x_{(\alpha,\beta)}\circ (a^{2})_{(\alpha,\beta)}\circ y_{(\alpha,\beta)}\\
& =(xa^{2}y)_{(\alpha\wedge\alpha\wedge\alpha,\beta\vee\beta\vee\beta)}\\
&=a_{(\alpha,\beta)}.
\end{align*}
Hence $\underline{S}$ is intra-regular.

Conversely, let $\underline{S}$ is intra-regular and $a\in S.$ Then for any $\alpha,\beta\in [0,1],$ there exist elements $x_{(\gamma,\delta)},y_{(\eta,\theta)}\in\underline{S}$ such that%
\begin{align*}
a_{(\alpha,\beta)}& = x_{(\gamma,\delta)}\circ a_{(\alpha,\beta)}\circ a_{(\alpha,\beta)}\circ y_{(\eta,\theta)}=x_{(\gamma,\delta)}\circ (a^{2})_{(\alpha\wedge\alpha,\beta\vee\beta)}\circ y_{(\eta,\theta)}\\
& =x_{(\gamma,\delta)}\circ (a^{2})_{(\alpha,\beta)}\circ y_{(\eta,\theta)}=(xa^{2}y)_{(\gamma\wedge\alpha\wedge\eta,\delta\vee\beta\vee\theta)}.
\end{align*}
This implies that $a=xa^{2}y$ and $x,y\in S.$ Hence $S$ is intra-regular.
\end{proof}

\begin{theorem}
A semigroup $S$ is regular if and only if the semigroup $\underline{S}$ is regular.
\end{theorem}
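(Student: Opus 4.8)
The plan is to prove both directions by direct computation, exploiting the multiplicative rule $x_{(\alpha,\beta)}\circ y_{(\gamma,\delta)}=(xy)_{(\alpha\wedge\gamma,\beta\vee\delta)}$ established in the preliminaries. The argument is parallel in spirit to the proof of Theorem $3.9$ for intra-regularity, with the defining identity $a=axa$ replacing $a=xa^{2}b$. The point is that a triple product of fuzzy points collapses cleanly to a single fuzzy point whose base is the product of the bases and whose membership/non-membership values are the min/max of the three given values; when all three values coincide, the resulting fuzzy point inherits exactly those values.

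For the forward direction ($S$ regular $\Rightarrow\underline{S}$ regular), I would take an arbitrary $a_{(\alpha,\beta)}\in\underline{S}$, write $a$ for its base element, and invoke regularity of $S$ to obtain $x\in S$ with $a=axa$. Since $\alpha>0$ and $\beta<1$, the point $x_{(\alpha,\beta)}$ lies in $\underline{S}$, and I would then compute
\begin{align*}
a_{(\alpha,\beta)}\circ x_{(\alpha,\beta)}\circ a_{(\alpha,\beta)}
&=(axa)_{(\alpha\wedge\alpha\wedge\alpha,\beta\vee\beta\vee\beta)}\\
&=(axa)_{(\alpha,\beta)}=a_{(\alpha,\beta)},
\end{align*}
which exhibits $x_{(\alpha,\beta)}$ as a regularity witness for $a_{(\alpha,\beta)}$ in $\underline{S}$.

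For the converse ($\underline{S}$ regular $\Rightarrow S$ regular), I would fix an arbitrary $a\in S$, choose any admissible pair $(\alpha,\beta)$ with $\alpha>0$ and $\beta<1$, so that $a_{(\alpha,\beta)}\in\underline{S}$, and use regularity of $\underline{S}$ to obtain some $x_{(\gamma,\delta)}\in\underline{S}$ with $a_{(\alpha,\beta)}=a_{(\alpha,\beta)}\circ x_{(\gamma,\delta)}\circ a_{(\alpha,\beta)}$. Expanding the right-hand side gives $(axa)_{(\alpha\wedge\gamma\wedge\alpha,\,\beta\vee\delta\vee\beta)}=(axa)_{(\alpha\wedge\gamma,\,\beta\vee\delta)}$, and comparing base points yields $a=axa$ with $x\in S$, so $S$ is regular.

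The only real obstacle, and it is a mild one, lies in the converse: to extract the genuine semigroup identity $a=axa$ I must observe that an equality of two intuitionistic fuzzy points forces their underlying base elements to coincide (regardless of how the $\alpha\wedge\gamma$ and $\beta\vee\delta$ values compare with $\alpha,\beta$). This is immediate from the definition of $x_{(\alpha,\beta)}$, since a fuzzy point takes a value different from $(0,1)$ at exactly one element of $S$; hence the reduction to $a=axa$ is legitimate and completes the proof.
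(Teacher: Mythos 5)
Your proof is correct and takes essentially the same approach as the paper's: the forward direction uses the same witness $x_{(\alpha,\beta)}$ and the same collapse $(axa)_{(\alpha\wedge\alpha\wedge\alpha,\beta\vee\beta\vee\beta)}=a_{(\alpha,\beta)}$, and the converse extracts $a=axa$ from the equality $a_{(\alpha,\beta)}=(axa)_{(\alpha\wedge\gamma,\beta\vee\delta)}$ exactly as the paper does. Your closing observation that equal intuitionistic fuzzy points must share the same base element is a small detail the paper leaves implicit, but it is the same argument.
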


\begin{proof}
Let $a_{(\alpha,\beta)}\in\underline{S}$ and $a\in S.$ Then there exists an element $x\in S$ such that $a=axa($since $S$ is regular$).$ So $x_{(\alpha,\beta)}\in\underline{S}.$ Then%
\begin{align*}
a_{(\alpha,\beta)}\circ x_{(\alpha,\beta)}\circ a_{(\alpha,\beta)}& =(axa)_{(\alpha\wedge\alpha\wedge\alpha,\beta\vee\beta\vee\beta)}=(axa)_{(\alpha,\beta)}=a_{(\alpha,\beta)}.
\end{align*}
Hence $\underline{S}$ is regular.

Conversely, let $\underline{S}$ is regular and $a\in S.$ Then for any $\alpha,\beta\in [0,1],$ there exists an element $x_{(\gamma,\delta)}\in\underline{S}$ such that%
\begin{align*}
a_{(\alpha,\beta)}& = a_{(\alpha,\beta)}\circ x_{(\gamma,\delta)}\circ a_{(\alpha,\beta)}=(axa)_{(\alpha\wedge\gamma\wedge\alpha,\beta\vee\delta\vee\beta)}=(axa)_{(\gamma\wedge\alpha,\beta\vee\delta)}.
\end{align*}
This implies that $a=axa$ and $x\in S.$ Hence $S$ is regular.
\end{proof}

\begin{lemma}
$\cite{K2}$ For a semigroup $S,$ the following conditions are equivalent:

$(i)$ $S$ is intra-regular$($regular$),$

$(ii)$ $L\cap R\subset LR($resp. $R\cap L=RL)$ holds for every left ideal $L$ and right ideal $R$ of $S.$
\end{lemma}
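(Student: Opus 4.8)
The plan is to establish both equivalences by a single template, running the regular case and the intra-regular case in parallel and exploiting the left--right symmetry between them. In each case the forward implication I would split into a trivial containment plus the substantive one, and for the converse I would test the hypothesis on the principal one-sided ideals generated by a single element. I do not expect the forward directions to present any difficulty; the only genuine work lies in the degenerate sub-cases of the converse.

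For the forward implications, I would first note that one containment is automatic from the defining inclusions: $RL\subseteq RS\subseteq R$ and $RL\subseteq SL\subseteq L$ give $RL\subseteq R\cap L$ (and symmetrically $LR\subseteq L\cap R$), so no regularity hypothesis is needed there. The content is the reverse inclusion, obtained by inserting the defining identity. In the regular case, for $a\in R\cap L$ I would write $a=axa$; since $ax\in RS\subseteq R$, this gives $a=(ax)a\in RL$, hence $R\cap L=RL$. In the intra-regular case, for $a\in L\cap R$ I would write $a=xa^2y=(xa)(ay)$; since $xa\in SL\subseteq L$ and $ay\in RS\subseteq R$, this gives $a\in LR$.

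For the converse implications I would feed the hypothesis the principal right and left ideals $R(a)=aS\cup\{a\}$ and $L(a)=Sa\cup\{a\}$ generated by a chosen $a\in S$. Since $a\in R(a)\cap L(a)$, the regular hypothesis forces $a\in R(a)L(a)$, which expands (using $S^2\subseteq S$) to $aSa\cup\{a^2\}$; the intra-regular hypothesis forces $a\in L(a)R(a)=Sa^2S\cup Sa^2\cup a^2S\cup\{a^2\}$. In the \emph{generic} summand one reads off $a=axa$, resp.\ $a=xa^2y$, directly, and that sub-case is finished.

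The main obstacle, and the only place where care is needed, is the degenerate summands, where $a$ lands in a boundary piece ($a=a^2$, or $a\in Sa^2$, or $a\in a^2S$) rather than squarely in the two-sided product. The remedy is to substitute the partial identity back into itself to synthesize the missing one-sided factor. For instance, from $a=a^2v$ one computes $a=a^3v^2=a\cdot a^2\cdot v^2\in Sa^2S$; the mirror computation settles $a\in Sa^2$, and the idempotent case gives $a=a^4=a\cdot a^2\cdot a$. The corresponding idempotent case in the regular argument is even softer: $a=a^2$ yields $a=a^3=a\cdot a\cdot a$. Once each degenerate summand has been reduced to the honest form $a\in aSa$, resp.\ $a\in Sa^2S$, regularity, resp.\ intra-regularity, of $S$ follows, completing the converse.
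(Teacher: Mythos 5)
The paper never proves this lemma: it is imported verbatim from Kuroki \cite{K2} as a known crisp-semigroup fact and used later only as a black box (in Theorems 3.14 and 3.16), so the comparison here is between your argument and the classical literature rather than anything in the paper itself. Your proof is correct, and it is essentially the standard identity-free argument. The forward directions are as routine as you say: $RL\subseteq R\cap L$ (resp.\ $LR\subseteq L\cap R$) follows from $RS\subseteq R$ and $SL\subseteq L$ alone, and inserting $a=axa$, resp.\ $a=xa^{2}y=(xa)(ay)$, yields the substantive inclusion. The converse via the principal one-sided ideals $R(a)=aS\cup\{a\}$ and $L(a)=Sa\cup\{a\}$ is exactly the right device for a semigroup without identity, and your treatment of the degenerate summands is where the real content lies and is carried out correctly: from $a=a^{2}v$ one gets $a=a(a^{2}v)v=a\cdot a^{2}\cdot v^{2}\in Sa^{2}S$; from $a=ua^{2}$, symmetrically, $a=u(ua^{2})a=u^{2}\cdot a^{2}\cdot a\in Sa^{2}S$; from $a=a^{2}$, $a=a^{4}=a\cdot a^{2}\cdot a$ (and $a=a^{3}=a\cdot a\cdot a$ in the regular case). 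Omitting these substitutions would be a genuine gap, since the hypothesis only places $a$ in $Sa^{2}S\cup Sa^{2}\cup a^{2}S\cup\{a^{2}\}$ (resp.\ $aSa\cup\{a^{2}\}$). One incidental observation your proof makes visible: the regular converse uses only the inclusion $R(a)\cap L(a)\subseteq R(a)L(a)$, so the formally weaker hypothesis $R\cap L\subseteq RL$ for all $R,L$ already forces regularity; the stated equality then comes for free from the automatic containment.
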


\begin{lemma}
$\cite{S}$ For a semigroup $S,$ the following conditions are equivalent:

$(i)$ $S$ is intra-regular$($regular$),$

$(ii)$ $A\cap B\subset A\circ B($resp. $B\cap A=B\circ A)$ holds for every intuitionistic fuzzy left ideal $A=(\mu_{A},\nu_{A})$ and intuitionistic fuzzy right ideal $B=(\mu_{B},\nu_{B})$ of $S.$
\end{lemma}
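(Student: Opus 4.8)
The plan is to prove the two equivalences together (the intra-regular case, with the inclusion $A\cap B\subseteq A\circ B$, and the regular case, with the equality $B\cap A=B\circ A$), and in each to split into the two implications: the forward direction by a direct estimate using the (intra-)regularity element, and the converse by passing to characteristic functions and invoking the crisp Lemma $3.12$.

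For the forward direction of the regular case I would assume $S$ regular and fix an IF left ideal $A$ and IF right ideal $B$. First I would note the containment $B\circ A\subseteq B\cap A$ in an arbitrary semigroup: for any factorization $x=uv$ we have $\mu_B(uv)\geq\mu_B(u)\geq\min\{\mu_B(u),\mu_A(v)\}$ and $\nu_B(uv)\leq\nu_B(u)\leq\max\{\nu_B(u),\nu_A(v)\}$, and likewise $\mu_A(uv)\geq\mu_A(v)$ and $\nu_A(uv)\leq\nu_A(v)$, so taking the $\sup$/$\inf$ over factorizations gives $B\circ A\subseteq B$ and $B\circ A\subseteq A$, hence $B\circ A\subseteq B\cap A$. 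For the reverse I would use regularity: writing $x=xax=(xa)x$ gives $(\mu_B\circ\mu_A)(x)\geq\min\{\mu_B(xa),\mu_A(x)\}\geq\min\{\mu_B(x),\mu_A(x)\}$ (since $B$ is a right ideal, $\mu_B(xa)\geq\mu_B(x)$), and dually $(\nu_B\circ\nu_A)(x)\leq\max\{\nu_B(xa),\nu_A(x)\}\leq\max\{\nu_B(x),\nu_A(x)\}$, so $B\cap A\subseteq B\circ A$, and equality follows. The intra-regular forward direction is the same computation with the factorization $x=cx^2d=(cx)(xd)$, using $\mu_A(cx)\geq\mu_A(x)$ ($A$ left) and $\mu_B(xd)\geq\mu_B(x)$ ($B$ right); here only the inclusion $A\cap B\subseteq A\circ B$ is produced, which is all that is claimed.

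For the converse in both cases I would reduce to Lemma $3.12$ via characteristic functions. For $T\subseteq S$ let $\chi_T=(\mu_{\chi_T},\nu_{\chi_T})$ be its IF characteristic set ($\mu_{\chi_T}=1,\ \nu_{\chi_T}=0$ on $T$, and $\mu_{\chi_T}=0,\ \nu_{\chi_T}=1$ off $T$). The routine facts I would record (or check in a line) are: $\chi_T$ is an IF left (right) ideal exactly when $T$ is a left (right) ideal; $\chi_A\cap\chi_B=\chi_{A\cap B}$; and $\chi_A\circ\chi_B=\chi_{AB}$, with $AB$ the set product. Granting these, in the regular case take an arbitrary left ideal $L$ and right ideal $R$ and apply the hypothesis to $A=\chi_L$, $B=\chi_R$ to get $\chi_{R\cap L}=\chi_R\cap\chi_L=\chi_R\circ\chi_L=\chi_{RL}$, whence $R\cap L=RL$; Lemma $3.12$ then gives that $S$ is regular. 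The intra-regular case is identical, with $\chi_L\cap\chi_R\subseteq\chi_L\circ\chi_R$ translating to $L\cap R\subseteq LR$ and Lemma $3.12$ yielding intra-regularity.

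I expect the main obstacle to be the bookkeeping in the converse: namely the verification of the product identity $\chi_A\circ\chi_B=\chi_{AB}$ and the correct translation of IF inclusion (where $A\subseteq B$ means $\mu_A\leq\mu_B$ and $\nu_A\geq\nu_B$) into ordinary set inclusion. Once these are in place the forward direction is a short direct estimate and the converse is a clean appeal to Lemma $3.12$. An alternative to the characteristic-function reduction would be to argue the converse directly on principal one-sided ideals $L(a)$ and $R(a)$, but that essentially re-proves Lemma $3.12$ and is less economical.
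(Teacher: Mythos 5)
Your argument is correct, but there is nothing in the paper to compare it against: this statement is Lemma $3.13$, which the authors quote from reference \cite{S} (a separate, communicated paper) and never prove in the text. Judged on its own merits, your proof is sound and is the standard route (Kuroki's argument in the fuzzy setting, lifted to the intuitionistic case). The forward direction is exactly right: the inclusion $B\circ A\subseteq B\cap A$ holds in any semigroup (this is precisely Lemma $3.15$ of the paper, likewise quoted from \cite{S} without proof), and the factorization $x=(xa)x$ in the regular case, respectively $x=(cx)(xd)$ in the intra-regular case, bounds $(\mu_B\circ\mu_A)(x)$ below by $\min\{\mu_B(x),\mu_A(x)\}$ and $(\nu_B\circ\nu_A)(x)$ above by $\max\{\nu_B(x),\nu_A(x)\}$, using the one-sided ideal conditions on the factors. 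The converse via characteristic functions is also valid: $\chi_T$ is an intuitionistic fuzzy left (right) ideal iff $T$ is a left (right) ideal, $\chi_L\cap\chi_R=\chi_{L\cap R}$, $\chi_L\circ\chi_R=\chi_{LR}$, and inclusion of characteristic intuitionistic fuzzy sets (membership $\leq$, non-membership $\geq$) is equivalent to inclusion of the underlying sets, so the hypothesis applied to $\chi_L$, $\chi_R$ yields condition $(ii)$ of the crisp Lemma $3.12$, which gives (intra-)regularity. Two small points to make explicit in a final write-up: the no-factorization case in the product (where $(\mu\circ\mu')(x)=0$ and $(\nu\circ\nu')(x)=1$), which your inclusion $B\circ A\subseteq B\cap A$ uses silently, and the verification of $\chi_L\circ\chi_R=\chi_{LR}$, which is the one identity carrying real content in the converse; both are one-line checks and neither affects correctness.
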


\begin{theorem}
For a semigroup $S,$ the following conditions are equivalent:

$(i)$ $S$ is intra-regular,

$(ii)$ $\underline{A}\cap\underline{B}\subset \underline{A}\circ\underline{B}$ for every intuitionistic fuzzy left ideal $A=(\mu_{A},\nu_{A})$ and intuitionistic fuzzy right ideal $B=(\mu_{B},\nu_{B})$ of $S.$
\end{theorem}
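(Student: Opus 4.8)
The plan is to prove the equivalence in two directions, leaning on Lemma 3.13 (the intuitionistic fuzzy characterisation of intra-regularity) for the hard direction and on a direct factorisation for the easy one. Note first that Proposition 3.1$(iii)$ only gives $\underline{A}\circ\underline{B}\subseteq\underline{A\circ B}$, which is the wrong inclusion, so I cannot simply transport Lemma 3.13 through the underline operation; the two directions must be argued separately.

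For $(i)\Rightarrow(ii)$ I would argue directly. Take $z_{(\lambda,\mu)}\in\underline{A}\cap\underline{B}$, so that $\mu_A(z),\mu_B(z)\geq\lambda$ and $\nu_A(z),\nu_B(z)\leq\mu$. Since $S$ is intra-regular there are $a,b\in S$ with $z=az^2b=(az)(zb)$. Because $A$ is an intuitionistic fuzzy left ideal, $\mu_A(az)\geq\mu_A(z)\geq\lambda$ and $\nu_A(az)\leq\nu_A(z)\leq\mu$, so $(az)_{(\lambda,\mu)}\in\underline{A}$; dually, the right-ideal property of $B$ gives $(zb)_{(\lambda,\mu)}\in\underline{B}$. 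Then $(az)_{(\lambda,\mu)}\circ(zb)_{(\lambda,\mu)}=(az^2b)_{(\lambda\wedge\lambda,\mu\vee\mu)}=z_{(\lambda,\mu)}$, whence $z_{(\lambda,\mu)}\in\underline{A}\circ\underline{B}$. This is clean and uses no edge-case analysis.

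For $(ii)\Rightarrow(i)$ I would reduce to Lemma 3.13, i.e.\ prove $A\cap B\subseteq A\circ B$, which amounts to $\min\{\mu_A(x),\mu_B(x)\}\leq(\mu_A\circ\mu_B)(x)$ and $\max\{\nu_A(x),\nu_B(x)\}\geq(\nu_A\circ\nu_B)(x)$ for each $x\in S$. Fix $x$, put $\lambda=\min\{\mu_A(x),\mu_B(x)\}$ and $\mu=\max\{\nu_A(x),\nu_B(x)\}$. In the principal case $\lambda>0$ one first checks that $x_{(\lambda,\mu)}$ is a legitimate point: if say $\mu=\nu_A(x)$ then $\lambda\leq\mu_A(x)$ gives $\lambda+\mu\leq\mu_A(x)+\nu_A(x)\leq1$, and $\lambda>0$ forces $\mu<1$. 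Moreover $x_{(\lambda,\mu)}\in\underline{A}\cap\underline{B}$, so by $(ii)$ it factors as $x_{(\lambda,\mu)}=p_{(\alpha_1,\beta_1)}\circ q_{(\alpha_2,\beta_2)}=(pq)_{(\alpha_1\wedge\alpha_2,\beta_1\vee\beta_2)}$ with $p_{(\alpha_1,\beta_1)}\in\underline{A}$ and $q_{(\alpha_2,\beta_2)}\in\underline{B}$. Comparing coordinates yields $x=pq$, $\lambda=\alpha_1\wedge\alpha_2$, $\mu=\beta_1\vee\beta_2$, and then $(\mu_A\circ\mu_B)(x)\geq\min\{\mu_A(p),\mu_B(q)\}\geq\alpha_1\wedge\alpha_2=\lambda$ together with $(\nu_A\circ\nu_B)(x)\leq\max\{\nu_A(p),\nu_B(q)\}\leq\beta_1\vee\beta_2=\mu$, which are exactly the two required inequalities. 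Lemma 3.13 then delivers intra-regularity.

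The step I expect to be the main obstacle is the degenerate case $\lambda=0$. Here no admissible intuitionistic fuzzy point sits at $x$ inside $\underline{A}\cap\underline{B}$ (points require a positive first coordinate), so hypothesis $(ii)$ supplies no information at $x$ and the computation above breaks down. The membership inequality is harmless, since its left-hand side is then $0$; the delicate part is the non-membership inequality, which I would settle in the manner of the paper's earlier theorems by pairing the extreme values (treating $\mu_A(x)=0$ together with $\nu_A(x)=1$, and likewise for $B$) so that both sides become trivial. This boundary bookkeeping, rather than the central factorisation argument, is where the proof must be handled with care.
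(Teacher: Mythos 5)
Your direction $(i)\Rightarrow(ii)$ is correct and genuinely more direct than the paper's: the paper obtains it by lifting intra-regularity to $\underline{S}$ (Theorem 3.10), converting $A,B$ into a left and a right ideal of $\underline{S}$ (Theorem 3.5), and then quoting the crisp Lemma 3.12, whereas you unwind all of that into the single factorization $z=az^{2}b=(az)(zb)$ and verify the two point memberships by hand; this is self-contained and costs nothing. Your main case of $(ii)\Rightarrow(i)$ is essentially the paper's argument: both of you feed the point $x_{(\mu_{A}(x)\wedge\mu_{B}(x),\,\nu_{A}(x)\vee\nu_{B}(x))}$ into the hypothesis and reduce to Lemma 3.13. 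The only cosmetic difference is that the paper routes the conclusion through $\underline{A}\circ\underline{B}\subseteq\underline{A\circ B}$ (Proposition 3.1$(iii)$ --- which, contrary to your opening remark, is precisely the \emph{useful} inclusion in this direction), while you extract an explicit factorization $x=pq$; the two are equivalent, and your check that $\lambda+\mu\leq 1$, so that $x_{(\lambda,\mu)}$ is a legitimate point, is a detail the paper silently skips.

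The genuine problem is the degenerate case, and the fix you propose does not work. ``Pairing the extreme values'' --- treating $\mu_{A}(x)=0$ as if it came with $\nu_{A}(x)=1$ --- is illegitimate for intuitionistic fuzzy sets: the definition only requires $\mu_{A}(x)+\nu_{A}(x)\leq 1$, so $\mu_{A}(x)=0$ is perfectly compatible with $\nu_{A}(x)<1$. In the configuration $\mu_{A}(x)\wedge\mu_{B}(x)=0$ but $\nu_{A}(x)\vee\nu_{B}(x)<1$, the non-membership inequality $(\nu_{A}\circ\nu_{B})(x)\leq\nu_{A}(x)\vee\nu_{B}(x)$ is \emph{not} trivial (if, say, $x$ admits no factorization $x=uv$ in $S$, then $(\nu_{A}\circ\nu_{B})(x)=1$), and hypothesis $(ii)$ applied to the pair $A,B$ says nothing at $x$, since no admissible point at $x$ lies in $\underline{A}\cap\underline{B}$. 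You should know that the paper's own proof has exactly this hole: its two cases (``$\mu_{A}(x)=0$ or $\mu_{B}(x)=0$, and $\nu_{A}(x)=1$ or $\nu_{B}(x)=1$'' versus ``all four values non-extreme'') are not exhaustive, so your proposal is no worse than the published argument --- but neither is complete. Two honest repairs: $(a)$ use that $(ii)$ is assumed for \emph{all} pairs of ideals, and apply it to the auxiliary pair $A'=(1-\nu_{A},\nu_{A})$, $B'=(1-\nu_{B},\nu_{B})$, which are an intuitionistic fuzzy left and right ideal respectively (since $\nu_{A}(xy)\leq\nu_{A}(y)$ gives $1-\nu_{A}(xy)\geq 1-\nu_{A}(y)$), and whose membership parts are strictly positive exactly where you need points; running your factorization argument for $A',B'$ at $x$ yields $x=pq$ with $\nu_{A}(p)\leq\nu_{A}(x)\vee\nu_{B}(x)$ and $\nu_{B}(q)\leq\nu_{A}(x)\vee\nu_{B}(x)$, which is the missing inequality for the original pair. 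Or $(b)$ bypass Lemma 3.13 altogether: apply $(ii)$ to the characteristic pairs $(\chi_{L},1-\chi_{L})$ and $(\chi_{R},1-\chi_{R})$ of crisp ideals $L,R$, read off $L\cap R\subseteq LR$ from the factorization of the point $u_{(1,0)}$ for $u\in L\cap R$, and quote Lemma 3.12. Either way the theorem stands; it is only this boundary case --- the one you correctly singled out as the danger point --- whose treatment must be replaced.
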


\begin{proof}
$(i)\Rightarrow (ii):$ Let $S$ be an intra-regular semigroup and $A=(\mu_{A},\nu_{A})$, $B=(\mu_{B},\nu_{B})$ are respectively intuitionistic fuzzy left and intuitionistic fuzzy right ideal of $S.$ By Theorem $3.10,$ $\underline{S}$ is an intra-regular semigroup. By Theorem $3.5,$ $\underline{A}$ and $\underline{B}$ are respectively left and right ideal of the semigroup $\underline{S}.$ Hence by Lemma $3.12,$ $\underline{A}\cap\underline{B}\subset \underline{A}\circ\underline{B}.$

$(ii)\Rightarrow (i):$ Let $A=(\mu_{A},\nu_{A})$ be an intuitionistic fuzzy left and $B=(\mu_{B},\nu_{B})$ be an intuitionistic fuzzy right ideal of $S$ such that $\underline{A}\cap\underline{B}\subset \underline{A}\circ\underline{B}.$ Let $x\in S.$ If $(\mu_{A}(x)=0$ or $\mu_{B}(x)=0)$ and $(\nu_{A}(x)=1$ or $\nu_{B}(x)=1),$ then $0=\mu_{A}(x)\wedge\mu_{B}(x)\leq(\mu_{A}\circ\mu_{B})(x),i.e.,(\mu_{A}\cap\mu_{B})(x)\leq(\mu_{A}\circ\mu_{B})(x)$ and $1=\nu_{A}(x)\vee\nu_{B}(x)\geq(\nu_{A}\circ\nu_{B})(x),i.e.,(\nu_{A}\cup\nu_{B})(x)\geq(\nu_{A}\circ\nu_{B})(x).$ Hence $A\circ B\supset A\cap B.$

Again if $(\mu_{A}(x)>0$ and $\nu_{A}(x)<1)$ and $(\mu_{B}(x)>0$ and $\nu_{B}(x)<1),$ then $x_{(\mu_{A}(x)\wedge\mu_{B}(x),\nu_{A}(x)\vee\nu_{B}(x))}\in\underline{A}$ and $x_{(\mu_{A}(x)\wedge\mu_{B}(x),\nu_{A}(x)\vee\nu_{B}(x))}\in\underline{B}.$

Hence $x_{(\mu_{A}(x)\wedge\mu_{B}(x),\nu_{A}(x)\vee\nu_{B}(x))}\in\underline{A}\cap \underline{B}\subset\underline{A}\circ \underline{B}\subseteq\underline{A\circ B}(cf.$ Proposition $3.1).$ Then $(\mu_{A}\circ\mu_{B})(x)\geq\mu_{A}(x)\wedge\mu_{B}(x)=(\mu_{A}\cap\mu_{B})(x)$ and $(\nu_{A}\circ\nu_{B})(x)\leq\nu_{A}(x)\vee\mu_{B}(x)=(\nu_{A}\cup\nu_{B})(x)$ which implies that $A\circ B\supseteq A\cap B.$ Hence by Lemma $3.13,$ $S$ is intra-regular.
\end{proof}

\begin{lemma}
$\cite{S}$ Let $A=(\mu_{A},\nu_{A})$ be an intuitionistic fuzzy right ideal and $B=(\mu_{B},\nu_{B})$ be an intuitionistic fuzzy left ideal of a semigroup $S.$ Then $A\circ B\subseteq A\cap B.$
\end{lemma}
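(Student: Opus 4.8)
The plan is to unwind the definition of intuitionistic fuzzy inclusion and reduce $A\circ B\subseteq A\cap B$ to the two pointwise inequalities
$$(\mu_A\circ\mu_B)(x)\leq\min\{\mu_A(x),\mu_B(x)\}\quad\text{and}\quad(\nu_A\circ\nu_B)(x)\geq\max\{\nu_A(x),\nu_B(x)\}$$
for every $x\in S$; here I use the convention (the same orientation employed in the proof of Theorem $3.14$) that $A\circ B\subseteq A\cap B$ means the membership function of $A\circ B$ is pointwise $\leq$ that of $A\cap B$ while its non-membership function is pointwise $\geq$. I would then fix an arbitrary $x\in S$ and argue pointwise.

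If $x$ admits no factorization, that is $x\neq uv$ for all $u,v\in S$, then by the very definition of $\circ$ we have $(\mu_A\circ\mu_B)(x)=0$ and $(\nu_A\circ\nu_B)(x)=1$, so both inequalities hold trivially since all membership and non-membership values lie in $[0,1]$.

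The substantive case is $x=uv$ for some $u,v\in S$. The key observation is that the one-sided ideal conditions of Definition $2.12$ control each term of the supremum and the infimum appearing in the definition of $\circ$. For any factorization $x=uv$, the right-ideal property of $A$ gives $\mu_A(u)\leq\mu_A(uv)=\mu_A(x)$ and $\nu_A(u)\geq\nu_A(uv)=\nu_A(x)$, while the left-ideal property of $B$ gives $\mu_B(v)\leq\mu_B(uv)=\mu_B(x)$ and $\nu_B(v)\geq\nu_B(uv)=\nu_B(x)$. Hence $\min\{\mu_A(u),\mu_B(v)\}\leq\min\{\mu_A(x),\mu_B(x)\}$ and $\max\{\nu_A(u),\nu_B(v)\}\geq\max\{\nu_A(x),\nu_B(x)\}$ for every decomposition of $x$. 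Taking the supremum over all factorizations in the first chain and the infimum in the second yields exactly the two desired bounds, whence $A\circ B\subseteq A\cap B$.

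There is no genuine obstacle here: the statement is a direct consequence of the absorption (one-sided ideal) properties, and the only points requiring mild care are the orientation of the intuitionistic fuzzy inclusion---membership functions are compared with $\leq$ but non-membership functions with $\geq$---together with the boundary conventions $0$ and $1$ built into the product $\circ$ for an element possessing no factorization.
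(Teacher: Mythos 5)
Your proof is correct. There is, however, nothing in the paper to compare it against: the lemma is stated with a citation to an unpublished companion paper of the authors, and no proof of it appears in this paper at all. Your argument---for each factorization $x=uv$, using the right-ideal property of $A$ and the left-ideal property of $B$ to get $\min\{\mu_A(u),\mu_B(v)\}\leq\min\{\mu_A(x),\mu_B(x)\}$ and $\max\{\nu_A(u),\nu_B(v)\}\geq\max\{\nu_A(x),\nu_B(x)\}$, then passing to the supremum and infimum respectively, with the factorization-free case disposed of by the $0$/$1$ conventions in the definition of $\circ$---is the standard direct verification, uses the correct orientation of intuitionistic fuzzy inclusion, and is evidently the proof the citation suppresses.
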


\begin{theorem}
For a semigroup $S,$ the following conditions are equivalent:

$(i)$ $S$ is regular,

$(ii)$ $\underline{B}\cap\underline{A}= \underline{B}\circ\underline{A}$ for every intuitionistic fuzzy left ideal $A=(\mu_{A},\nu_{A})$ and intuitionistic fuzzy right ideal $B=(\mu_{B},\nu_{B})$ of $S.$
\end{theorem}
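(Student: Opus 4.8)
The plan is to prove this exactly as Theorem $3.14$ was proved for intra-regular semigroups: transport the problem to the semigroup $\underline{S}$ for the forward direction, and pull an intuitionistic fuzzy identity back to $S$ for the converse. The tools I would use are Theorem $3.11$ (regularity transfers between $S$ and $\underline{S}$), Theorem $3.5$ (the correspondence $A\leftrightarrow\underline{A}$ for one-sided ideals), the crisp characterization Lemma $3.13$ together with its intuitionistic fuzzy counterpart Lemma $3.12$ in the regular case, and finally Lemma $3.15$ and Proposition $3.1(iii)$ to handle the two inclusions in the converse.

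For $(i)\Rightarrow(ii)$: assume $S$ is regular and let $A=(\mu_{A},\nu_{A})$ be an intuitionistic fuzzy left ideal and $B=(\mu_{B},\nu_{B})$ an intuitionistic fuzzy right ideal of $S$. By Theorem $3.11$ the semigroup $\underline{S}$ is regular, and by Theorem $3.5$ the sets $\underline{A}$ and $\underline{B}$ are respectively a left ideal and a right ideal of $\underline{S}$. Since the $\circ$-product of two subsets of $\underline{S}$ coincides with their set product in the semigroup $\underline{S}$, the regular case of Lemma $3.13$ applied to $\underline{S}$ with right ideal $\underline{B}$ and left ideal $\underline{A}$ gives $\underline{B}\cap\underline{A}=\underline{B}\,\underline{A}=\underline{B}\circ\underline{A}$, which is exactly $(ii)$.

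For $(ii)\Rightarrow(i)$: the strategy is to upgrade the hypothesis on points to the intuitionistic fuzzy identity $B\circ A=B\cap A$ for all such $A,B$, and then invoke the regular case of Lemma $3.12$. One inclusion, $B\circ A\subseteq B\cap A$, is free from Lemma $3.15$. For the reverse inclusion I would fix $x\in S$ and argue by cases as in Theorem $3.14$. If $\mu_{A}(x)\wedge\mu_{B}(x)=0$ and $\nu_{A}(x)\vee\nu_{B}(x)=1$, the intersection value at $x$ is the bottom $(0,1)$, so both inequalities $(\mu_{A}\cap\mu_{B})(x)\leq(\mu_{B}\circ\mu_{A})(x)$ and $(\nu_{A}\cup\nu_{B})(x)\geq(\nu_{B}\circ\nu_{A})(x)$ hold trivially. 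Otherwise, set $m=\mu_{A}(x)\wedge\mu_{B}(x)>0$ and $M=\nu_{A}(x)\vee\nu_{B}(x)<1$; I would first check that $x_{(m,M)}$ is a legitimate point, i.e. $m+M\leq 1$, which holds because $m+M\leq\mu_{A}(x)+\nu_{A}(x)\leq 1$ or $m+M\leq\mu_{B}(x)+\nu_{B}(x)\leq 1$ according to which index attains $M$. As $\mu_{A}(x),\mu_{B}(x)\geq m$ and $\nu_{A}(x),\nu_{B}(x)\leq M$, the point $x_{(m,M)}$ lies in $\underline{A}\cap\underline{B}=\underline{B}\cap\underline{A}$, hence in $\underline{B}\circ\underline{A}$ by hypothesis, and therefore in $\underline{B\circ A}$ by Proposition $3.1(iii)$. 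Reading off membership and non-membership gives $(\mu_{B}\circ\mu_{A})(x)\geq m=(\mu_{B}\cap\mu_{A})(x)$ and $(\nu_{B}\circ\nu_{A})(x)\leq M=(\nu_{B}\cup\nu_{A})(x)$, the missing inclusion $B\cap A\subseteq B\circ A$. Combining the two inclusions yields $B\circ A=B\cap A$, and Lemma $3.12$ then forces $S$ to be regular.

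The main obstacle is this reverse inclusion in the converse: everything hinges on manufacturing a genuine point $x_{(m,M)}\in\underline{S}$ inside $\underline{A}\cap\underline{B}$ and pushing it through the hypothesis via Proposition $3.1(iii)$. The \emph{delicate} point is the case analysis, since the construction requires $m>0$ and $M<1$ for $x_{(m,M)}$ to be an intuitionistic fuzzy point; one should verify $m+M\leq 1$ as above, and should check that the two displayed cases really exhaust all $x$ (note that $M=1$ already forces $\mu_{A}(x)=0$ or $\mu_{B}(x)=0$, hence $m=0$, so the only borderline configuration is $m=0$ with $M<1$, where the membership inequality is automatic). The forward direction is routine once regularity has been moved onto $\underline{S}$.
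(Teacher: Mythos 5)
Your proposal reproduces the paper's own proof almost step for step: the forward direction via Theorems 3.11 and 3.5 plus the crisp characterization applied to $\underline{S}$, and the converse by building the point $x_{(m,M)}$, pushing it through the hypothesis and Proposition 3.1(iii), combining the resulting inclusion with Lemma 3.15, and finishing with the intuitionistic fuzzy characterization. Two bookkeeping remarks: you have interchanged the paper's labels (Lemma 3.12 is the \emph{crisp} lemma, Lemma 3.13 the intuitionistic fuzzy one, the opposite of your usage), and your verification that $m+M\le 1$, needed for $x_{(m,M)}$ to be an intuitionistic fuzzy point at all, is a correct detail that the paper silently omits.

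The genuine problem is the borderline case you identify and then wave away. When $m=\mu_A(x)\wedge\mu_B(x)=0$ but $M=\nu_A(x)\vee\nu_B(x)<1$ (say $\mu_A(x)=0$, $\nu_A(x)=\tfrac{1}{2}$, $\mu_B(x)=\tfrac{1}{2}$, $\nu_B(x)=\tfrac{1}{4}$, which is perfectly admissible for an intuitionistic fuzzy set), the membership inequality is indeed automatic, but the required non-membership inequality $(\nu_B\circ\nu_A)(x)\le M$ is not, and it is exactly as substantive as the one you prove in the main case: it fails outright, for instance, whenever $x$ admits no factorization $x=uv$, since then $(\nu_B\circ\nu_A)(x)=1>M$. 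Moreover the hypothesis cannot rescue you at such an $x$: a point $x_{(\alpha,\beta)}\in\underline{A}$ would need $0<\alpha\le\mu_A(x)=0$, so no point based at $x$ lies in $\underline{B}\cap\underline{A}$, and the point identity for the pair $(A,B)$ is silent about $x$. Concretely, on $S=(\mathbb{Z}^{+},+)$ take $\mu_A\equiv 0$, $\nu_A(n)=1/n$, $\mu_B\equiv\tfrac{1}{2}$, $\nu_B\equiv\tfrac{1}{4}$; then $\underline{A}=\emptyset$, so $\underline{B}\cap\underline{A}=\emptyset=\underline{B}\circ\underline{A}$, yet $(\nu_B\circ\nu_A)(2)=1>\tfrac{1}{2}=(\nu_B\cup\nu_A)(2)$, so the step ``point identity for $(A,B)$ implies $B\cap A\subseteq B\circ A$'' is false as a pairwise implication. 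In fairness, the paper's own proof has exactly the same hole: the two cases in the converse of Theorem 3.14, to which the proof of the present theorem defers, are not exhaustive, and the missing configuration is precisely this one; you noticed the non-exhaustiveness but repaired only the membership half. A clean way to close the gap is to drop the fuzzy identity altogether and use only characteristic pairs: for a crisp left ideal $L$ and right ideal $R$ of $S$, apply the hypothesis to $A=(\chi_L,1-\chi_L)$ and $B=(\chi_R,1-\chi_R)$; then $\underline{B}\cap\underline{A}$ and $\underline{B}\circ\underline{A}$ consist exactly of the points based in $R\cap L$ and in $RL$ respectively, so $R\cap L=RL$ for all such $L,R$, and the crisp Lemma 3.12 yields regularity of $S$ directly.
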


\begin{proof}
$(i)\Rightarrow (ii):$ Let $S$ be a regular semigroup and $A=(\mu_{A},\nu_{A})$, $B=(\mu_{B},\nu_{B})$ are respectively intuitionistic fuzzy left and intuitionistic fuzzy right ideal of $S.$ By Theorem $3.11,$ $\underline{S}$ is a regular semigroup. By Theorem $3.5,$ $\underline{A}$ and $\underline{B}$ are respectively left and right ideal of the semigroup $\underline{S}.$ Hence by Lemma $3.12,$ $\underline{A}\cap\underline{B}=\underline{A}\circ\underline{B}.$

$(ii)\Rightarrow (i):$ Form $(ii)\Rightarrow (i)$ we have $A\circ B\supseteq A\cap B.$ Again by Lemma $3.15,$ $A\circ B\subseteq A\cap B.$ Consequently, $A\circ B=A\cap B$ and hence by Lemma $3.13,$ $S$ is regular.
\end{proof}

\begin{theorem}
Let $A=(\mu_{A},\nu_{A})$ be a non-empty intuitionistic fuzzy subset of a semigroup $S.$ Then the following are equivalent:

$(i)$ $A=(\mu_{A},\nu_{A})$ is an intuitionistic fuzzy semiprime ideal of $S,$

$(ii)$ $\underline{A}$ is a semiprime ideal of $\underline{S}.$
\end{theorem}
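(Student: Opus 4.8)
The plan is to prove the two implications separately, in each case first peeling off the ``ideal'' part of the statement by invoking Theorem $3.5$ and then handling the extra semiprime condition by hand. Recall that an intuitionistic fuzzy semiprime ideal is, by definition, first of all an intuitionistic fuzzy ideal, and that ``$\underline{A}$ is a semiprime ideal of $\underline{S}$'' presupposes that $\underline{A}$ is an ideal of $\underline{S}$; so in both directions Theorem $3.5$ lets me pass freely between ``$A$ is an intuitionistic fuzzy ideal of $S$'' and ``$\underline{A}$ is an ideal of $\underline{S}$.'' The remaining content is to match the pointwise inequalities $\mu_A(x)\ge\mu_A(x^2)$, $\nu_A(x)\le\nu_A(x^2)$ with the ideal-theoretic condition ``$B^2\subseteq\underline{A}\Rightarrow B\subseteq\underline{A}$ for every ideal $B$ of $\underline{S}$.'' The computational engine throughout is the identity $x_{(\alpha,\beta)}\circ x_{(\alpha,\beta)}=(x^2)_{(\alpha\wedge\alpha,\beta\vee\beta)}=(x^2)_{(\alpha,\beta)}$.

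For $(i)\Rightarrow(ii)$ I would argue as follows. Assuming $A$ is an intuitionistic fuzzy semiprime ideal, $A$ is in particular an intuitionistic fuzzy ideal, so $\underline{A}$ is an ideal of $\underline{S}$ by Theorem $3.5$. Let $B$ be any ideal of $\underline{S}$ with $B^2\subseteq\underline{A}$ and take an arbitrary $x_{(\alpha,\beta)}\in B$. Squaring gives $(x^2)_{(\alpha,\beta)}=x_{(\alpha,\beta)}\circ x_{(\alpha,\beta)}\in B^2\subseteq\underline{A}$, hence $\mu_A(x^2)\ge\alpha$ and $\nu_A(x^2)\le\beta$. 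Substituting into the semiprime inequalities $\mu_A(x)\ge\mu_A(x^2)$ and $\nu_A(x)\le\nu_A(x^2)$ yields $\mu_A(x)\ge\alpha$ and $\nu_A(x)\le\beta$, i.e. $x_{(\alpha,\beta)}\in\underline{A}$. As the point was arbitrary, $B\subseteq\underline{A}$, so $\underline{A}$ is semiprime. This direction is clean.

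For $(ii)\Rightarrow(i)$ I would again use Theorem $3.5$ to get that $A$ is an intuitionistic fuzzy ideal, leaving only the two pointwise inequalities. Fix $x\in S$. If $\mu_A(x^2)=0$ and $\nu_A(x^2)=1$ the inequalities are trivial, so set $\alpha:=\mu_A(x^2)>0$ and $\beta:=\nu_A(x^2)<1$; then $x_{(\alpha,\beta)}\in\underline{S}$ and $(x^2)_{(\alpha,\beta)}=x_{(\alpha,\beta)}\circ x_{(\alpha,\beta)}\in\underline{A}$. To exploit semiprimeness I would let $B$ be the principal ideal of $\underline{S}$ generated by $x_{(\alpha,\beta)}$, verify $B^2\subseteq\underline{A}$, and then conclude from semiprimeness of $\underline{A}$ that $B\subseteq\underline{A}$, in particular $x_{(\alpha,\beta)}\in\underline{A}$; this gives $\mu_A(x)\ge\alpha=\mu_A(x^2)$ and $\nu_A(x)\le\beta=\nu_A(x^2)$, as required.

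The main obstacle is precisely the verification $B^2\subseteq\underline{A}$ in that last step. A typical element of $B^2$ is a point $w_{(\lambda,\mu)}$ whose base $w$ contains $x$ at least twice and whose level satisfies $\lambda\le\alpha$, $\mu\ge\beta$; since $A$ is an intuitionistic fuzzy ideal, $\mu_A(w)\ge\mu_A(v)$ and $\nu_A(w)\le\nu_A(v)$ for every consecutive subword $v$ of $w$. When the two occurrences of $x$ are adjacent, so that $x^2$ is a consecutive subword, this gives $\mu_A(w)\ge\mu_A(x^2)=\alpha\ge\lambda$ and dually $\nu_A(w)\le\beta\le\mu$, placing $w_{(\lambda,\mu)}$ in $\underline{A}$. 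The delicate case is when the two copies of $x$ are separated, e.g. a point $(xzx)_{(\alpha,\beta)}=x_{(\alpha,\beta)}\circ(zx)_{(\alpha,\beta)}\in B^2$ (note $(zx)_{(\alpha,\beta)}=z_{(\alpha,\beta)}\circ x_{(\alpha,\beta)}\in\underline{S}\circ x_{(\alpha,\beta)}\subseteq B$), for which the ideal axioms only yield $\mu_A(xzx)\ge\mu_A(x)$ rather than $\ge\mu_A(x^2)$. Controlling these mixed products is the one genuinely nontrivial point, and it is here that the special structure of $\underline{S}$ and the fact that every element of $B$ carries a copy of $x_{(\alpha,\beta)}$ must be brought to bear rather than the ideal axioms alone.
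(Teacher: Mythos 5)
Your direction $(i)\Rightarrow(ii)$ is correct, and it actually does more than the paper's own argument: the paper only verifies the elementwise property that $x_{(\alpha,\beta)}\circ x_{(\alpha,\beta)}\in\underline{A}$ forces $x_{(\alpha,\beta)}\in\underline{A}$, whereas you go on to deduce from this the ideal-wise condition of Definition 2.6 ($B^{2}\subseteq\underline{A}\Rightarrow B\subseteq\underline{A}$ for ideals $B$ of $\underline{S}$). The problem is the converse direction, and the gap you flag there is not a technical nuisance that more work could overcome: it is fatal to your reading of the statement. The paper's proof of $(ii)\Rightarrow(i)$ never constructs a principal ideal at all; it sets $\alpha=\mu_{A}(x^{2})$, $\beta=\nu_{A}(x^{2})$, notes $x_{(\alpha,\beta)}\circ x_{(\alpha,\beta)}=(x^{2})_{(\alpha,\beta)}\in\underline{A}$, and concludes $x_{(\alpha,\beta)}\in\underline{A}$ ``since $\underline{A}$ is a semiprime ideal of $\underline{S}$.'' In other words, the paper takes ``semiprime ideal of $\underline{S}$'' to mean the elementwise condition ($p\circ p\in\underline{A}\Rightarrow p\in\underline{A}$ for every point $p$), which is strictly stronger than the ideal-wise condition of Definition 2.6 that you are working with. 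Under the elementwise reading both directions are two-line computations (and your first direction shows that elementwise implies ideal-wise); under your reading, only $(i)\Rightarrow(ii)$ survives.

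Indeed, with Definition 2.6 taken literally, $(ii)\Rightarrow(i)$ is false, so no verification of $B^{2}\subseteq\underline{A}$ can succeed. Take $S$ to be the five-element semigroup of $2\times 2$ matrix units, $S=\{e_{11},e_{12},e_{21},e_{22},0\}$ with $e_{ij}e_{kl}=e_{il}$ if $j=k$ and $=0$ otherwise, and let $A$ be the characteristic pair of $\{0\}$, i.e. $\mu_{A}(0)=1,\nu_{A}(0)=0$ and $\mu_{A}=0,\nu_{A}=1$ elsewhere. Then $A$ is an intuitionistic fuzzy ideal, so $\underline{A}$ (the points based at $0$) is an ideal of $\underline{S}$, and $\underline{A}$ is semiprime in the sense of Definition 2.6: if an ideal $B$ of $\underline{S}$ contains a point $w_{(\lambda,\mu)}$ with $w=e_{ij}\neq 0$, then $B$ also contains $(e_{1i})_{(\lambda,\mu)}\circ w_{(\lambda,\mu)}\circ (e_{j1})_{(\lambda,\mu)}=(e_{11})_{(\lambda,\mu)}$, an idempotent point, whence $(e_{11})_{(\lambda,\mu)}=(e_{11})_{(\lambda,\mu)}\circ(e_{11})_{(\lambda,\mu)}\in B^{2}$ and so $B^{2}\not\subseteq\underline{A}$; consequently every ideal $B$ with $B^{2}\subseteq\underline{A}$ already lies in $\underline{A}$. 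Yet $A$ is not an intuitionistic fuzzy semiprime ideal, since $\mu_{A}(e_{12})=0<1=\mu_{A}(e_{12}^{2})=\mu_{A}(0)$. This is exactly your ``delicate case'': $e_{12}e_{21}e_{12}=e_{12}$, so the principal ideal generated by a point based at $e_{12}$ has square reaching outside $\underline{A}$, and the elementwise failure at $e_{12}$ is invisible to the ideal-wise definition. (A smaller blemish, shared with the paper: forming the point $x_{(\alpha,\beta)}$ requires $\mu_{A}(x^{2})>0$ and $\nu_{A}(x^{2})<1$, and the case $\mu_{A}(x^{2})=0$ with $\nu_{A}(x^{2})<1$, which your dichotomy omits, is handled by neither argument.) So the honest conclusion is: your proposal is correct in one direction, and the other direction becomes provable only after replacing Definition 2.6 by the elementwise notion of semiprimeness that the paper silently uses.
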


\begin{proof}
$(i)\Rightarrow (ii):$ Let $A=(\mu_{A},\nu_{A})$ be an intuitionistic fuzzy semiprime ideal of $S.$ Then $\mu_{A}(x)\geq\mu_{A}(x^{2})$ and $\nu_{A}(x)\leq\nu_{A}(x^{2})\forall x\in S.$ Let $x_{(\alpha,\beta)}\circ x_{(\alpha,\beta)}\in\underline{A},i.e.,(x^{2})_{(\alpha,\beta)}\in\underline{A}.$ Then $\mu_{A}(x^{2})\geq\alpha$ and $\nu_{A}(x^{2})\leq\beta.$ Since $A=(\mu_{A},\nu_{A})$ is an intuitionistic fuzzy semiprime ideal of $S,$ so $\mu_{A}(x)\geq\mu_{A}(x^{2})\geq\alpha$ and $\nu_{A}(x)\leq\nu_{A}(x^{2})\leq\beta,$ which implies that $x_{(\alpha,\beta)}\in\underline{A}.$ Hence $\underline{A}$ is a semiprime ideal of $\underline{S}.$

$(ii)\Rightarrow (i):$ Let $\underline{A}$ be a semiprime ideal of $\underline{S}.$ Let $\mu_{A}(x^{2})=\alpha$ and $\nu_{A}(x^{2})=\beta.$ Then $(x^{2})_{(\alpha,\beta)}\in\underline{A},i.e.,x_{(\alpha,\beta)}\circ x_{(\alpha,\beta)}\in\underline{A}$ implies that $x_{(\alpha,\beta)}\in\underline{A}($since $\underline{A}$ is a semiprime ideal of $\underline{S}).$ Then $\mu_{A}(x)\geq\alpha$ and $\nu_{A}(x)\leq\beta$ which implies that $\mu_{A}(x)\geq\mu_{A}(x^{2})$ and $\nu_{A}(x)\leq\nu_{A}(x^{2}).$ Hence $A=(\mu_{A},\nu_{A})$ is an intuitionistic fuzzy semiprime ideal of $S.$
\end{proof}

\begin{lemma}
$\cite{S}$ For a semigroup $S$ following conditions are equivalent:

$(i)$ $S$ is an intra-regular semigroup,

$(ii)$ every intuitionistic fuzzy ideal $A=(\mu_{A},\nu_{A})$ of $S$ is an intuitionistic fuzzy semiprime ideal of $S.$
\end{lemma}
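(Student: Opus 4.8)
The plan is to prove the two implications separately, with $(i)\Rightarrow(ii)$ being a short direct computation and $(ii)\Rightarrow(i)$ requiring a reduction to a classical fact about ideals.

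For $(i)\Rightarrow(ii)$ I would argue directly. Assume $S$ is intra-regular and let $A=(\mu_{A},\nu_{A})$ be any intuitionistic fuzzy ideal of $S$, so by Definition $2.14$ it satisfies $\mu_{A}(uv)\geq\max\{\mu_{A}(u),\mu_{A}(v)\}$ and $\nu_{A}(uv)\leq\min\{\nu_{A}(u),\nu_{A}(v)\}$. Fix $x\in S$; intra-regularity provides $a,b\in S$ with $x=ax^{2}b$. First I would apply the $\mu$-inequality to the factorization $x=a\cdot(x^{2}b)$ to obtain $\mu_{A}(x)\geq\mu_{A}(x^{2}b)$, then apply it again to $x^{2}b=x^{2}\cdot b$ to obtain $\mu_{A}(x^{2}b)\geq\mu_{A}(x^{2})$, whence $\mu_{A}(x)\geq\mu_{A}(x^{2})$. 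The dual chain using the $\nu$-inequalities gives $\nu_{A}(x)\leq\nu_{A}(x^{2})$. These are exactly the two extra conditions of Definition $2.15$, so $A$ is intuitionistic fuzzy semiprime.

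For $(ii)\Rightarrow(i)$ the idea is to feed the hypothesis the intuitionistic fuzzy characteristic function of an ordinary ideal. Given a two-sided ideal $I$ of $S$, define $\chi_{I}=(\mu,\nu)$ by $\mu(x)=1,\ \nu(x)=0$ when $x\in I$ and $\mu(x)=0,\ \nu(x)=1$ otherwise; since $I$ is two-sided one checks directly that $\chi_{I}$ obeys the inequalities of Definition $2.14$, so $\chi_{I}$ is an intuitionistic fuzzy ideal. By hypothesis $\chi_{I}$ is then intuitionistic fuzzy semiprime, and reading $\mu(x)\geq\mu(x^{2})$ at a point $x$ with $x^{2}\in I$ forces $\mu(x)=1$, i.e. $x\in I$. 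Thus every ideal $I$ of $S$ is completely semiprime in the sense that $x^{2}\in I\Rightarrow x\in I$.

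To finish, fix $a\in S$ and apply the previous paragraph to the principal ideal $I=\langle a^{2}\rangle=\{a^{2}\}\cup Sa^{2}\cup a^{2}S\cup Sa^{2}S$. Since $a^{2}\in I$, complete semiprimeness yields $a\in\langle a^{2}\rangle$, and it remains to upgrade this to $a\in Sa^{2}S$, which is precisely intra-regularity. I expect this last step to be the main obstacle, since one must treat the four possible memberships of $a$ separately, in each non-trivial case substituting the defining relation into itself: from $a=sa^{2}$ one derives $a=s^{2}a^{2}a\in Sa^{2}S$, dually from $a=a^{2}t$ one gets $a=a\,a^{2}t^{2}\in Sa^{2}S$, and the idempotent case $a=a^{2}$ gives $a=a\,a^{2}a\in Sa^{2}S$. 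Once every case is seen to land in $Sa^{2}S$, the semigroup $S$ is intra-regular and the equivalence is established.
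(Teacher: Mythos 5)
Your proof is correct. Note first that the paper itself gives no argument for this lemma: it is imported wholesale from reference \cite{S} (an unpublished, ``communicated'' manuscript), so there is no in-paper proof to compare against; your proposal in effect supplies the missing argument. Both halves check out. The direction $(i)\Rightarrow(ii)$ is exactly the expected computation: writing $x=ax^{2}b$ and applying Definition $2.14$ twice gives $\mu_{A}(x)\geq\mu_{A}(x^{2}b)\geq\mu_{A}(x^{2})$ and dually $\nu_{A}(x)\leq\nu_{A}(x^{2})$, which are the conditions of Definition $2.15$. The direction $(ii)\Rightarrow(i)$ via the characteristic pair $\chi_{I}=(\mu,\nu)$ is the standard reduction to the crisp setting (it mirrors Kuroki's classical characterization of intra-regular semigroups), and your verifications are sound: $\chi_{I}$ is an intuitionistic fuzzy ideal precisely because $I$ is two-sided, semiprimeness of $\chi_{I}$ yields the completely semiprime property $x^{2}\in I\Rightarrow x\in I$, and applying this to the principal ideal $\langle a^{2}\rangle=\{a^{2}\}\cup Sa^{2}\cup a^{2}S\cup Sa^{2}S$ gives $a\in\langle a^{2}\rangle$. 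Your four-case upgrade to $a\in Sa^{2}S$ is also right: $a=sa^{2}$ gives $a=s(sa^{2})a=s^{2}a^{2}a$, $a=a^{2}t$ gives $a=a(a^{2}t)t=aa^{2}t^{2}$, and $a=a^{2}$ gives $a=aa^{2}a$, each landing in $Sa^{2}S$, while $a\in Sa^{2}S$ is intra-regularity itself. The one stylistic remark is that the paper's crisp Definition $2.6$ of semiprime is the ideal-wise one ($A^{2}\subseteq I\Rightarrow A\subseteq I$); you bypass it entirely by deriving the element-wise property directly from the fuzzy hypothesis, which is cleaner and avoids having to relate the two notions.
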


\begin{theorem}
For any non-empty intuitionistic fuzzy subset $A=(\mu_{A},\nu_{A})$ of an intra-regular semigroup $S$ the following conditions are equivalent:

$(i)$ $A=(\mu_{A},\nu_{A})$ is an intuitionistic fuzzy ideal of $S,$

$(ii)$ $\underline{A}$ is a semiprime ideal of $\underline{S}.$
\end{theorem}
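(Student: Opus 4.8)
The plan is to assemble this equivalence entirely from the machinery already developed, since intra-regularity is exactly the hypothesis that collapses the distinction between intuitionistic fuzzy ideals and intuitionistic fuzzy semiprime ideals. No fresh computation with the product $\circ$ or with the supremum/infimum formulas should be needed; everything should reduce to citing Theorems $3.5$ and $3.17$ together with Lemma $3.18$.

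For $(i)\Rightarrow(ii)$, I would start from the assumption that $A=(\mu_{A},\nu_{A})$ is an intuitionistic fuzzy ideal of $S$. Because $S$ is intra-regular, Lemma $3.18$ immediately upgrades $A$ to an intuitionistic fuzzy semiprime ideal of $S$. Then Theorem $3.17$ transfers this property to the set of intuitionistic fuzzy points, yielding that $\underline{A}$ is a semiprime ideal of $\underline{S}$. This is the only direction in which the intra-regularity hypothesis is genuinely used, and it enters solely through Lemma $3.18$.

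For $(ii)\Rightarrow(i)$, I would observe that a semiprime ideal is, by Definition $2.6$, in particular an ideal; hence if $\underline{A}$ is a semiprime ideal of $\underline{S}$, then $\underline{A}$ is an ideal of $\underline{S}$. Applying the two-sided case of Theorem $3.5$ then gives that $A=(\mu_{A},\nu_{A})$ is an intuitionistic fuzzy ideal of $S$. Equivalently, one could first apply Theorem $3.17$ to conclude that $A$ is an intuitionistic fuzzy semiprime ideal of $S$ and then invoke Definition $2.15$, which builds the intuitionistic fuzzy ideal condition into that of a semiprime one. I would note in passing that this converse requires no regularity assumption whatsoever.

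The main point to get right is not a technical obstacle but the bookkeeping of which earlier result carries the burden in each direction: Lemma $3.18$ is the essential and only nontrivial input for the forward implication, while the converse is purely structural, flowing from the fact that \emph{semiprime ideal} refines \emph{ideal} together with Theorems $3.17$ and $3.5$. Since all verifications involving the point-product and the membership/non-membership suprema and infima were already discharged in establishing Theorems $3.5$, $3.10$, and $3.17$, the proof should amount to a short chain of citations.
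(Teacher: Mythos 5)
Your proposal is correct and takes essentially the same route as the paper: the forward direction is exactly Lemma $3.18$ followed by Theorem $3.17$, and your alternative phrasing of the converse (Theorem $3.17$ plus the fact that Definition $2.15$ builds the ideal condition into semiprimeness) is precisely the paper's argument, with your primary route via Definition $2.6$ and Theorem $3.5$ being a harmless rewiring of the same citations.
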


\begin{proof}
$(i)\Rightarrow (ii):$ Let $A=(\mu_{A},\nu_{A})$ be an intuitionistic fuzzy ideal of an intra-regular semigroup $S.$ Then by Lemma $3.18,$ $A=(\mu_{A},\nu_{A})$ is an intuitionistic fuzzy semiprime ideal of $S$ and hence by Theorem $3.17,$ $\underline{A}$ is a semiprime ideal of $\underline{S}.$

$(ii)\Rightarrow (i):$ Let $\underline{A}$ is a semiprime ideal of $\underline{S}.$ Then by Theorem $3.17,$ $A=(\mu_{A},\nu_{A})$ is an intuitionistic fuzzy semiprime ideal of $S$ and hence $A=(\mu_{A},\nu_{A})$ is an intuitionistic fuzzy ideal of $S.$
\end{proof}

\begin{theorem}
Let $A=(\mu_{A},\nu_{A})$ be a non-empty intuitionistic fuzzy subset of a semigroup $S.$ Then the following are equivalent:

$(i)$ $A=(\mu_{A},\nu_{A})$ is an intuitionistic fuzzy prime ideal of $S,$

$(ii)$ $\underline{A}$ is a prime ideal of $\underline{S}.$
\end{theorem}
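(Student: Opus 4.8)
The plan is to run the argument in close parallel with Theorem $3.17$, exploiting that ``prime ideal'' is layered on top of ``ideal'' in both settings. An intuitionistic fuzzy prime ideal is in particular an intuitionistic fuzzy ideal, and a prime ideal of $\underline{S}$ is in particular an ideal of $\underline{S}$, so Theorem $3.5$ already delivers the equivalence of the two ideal parts. Hence in each direction I would first cite Theorem $3.5$ to dispose of the ideal requirement, and then concentrate solely on transferring the primeness data: the equalities $\mu_{A}(xy)=\max\{\mu_{A}(x),\mu_{A}(y)\}$ and $\nu_{A}(xy)=\min\{\nu_{A}(x),\nu_{A}(y)\}$ on the one side, and the implication ``$p\circ q\in\underline{A}\Rightarrow p\in\underline{A}$ or $q\in\underline{A}$'' on the other.

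For $(ii)\Rightarrow(i)$ I expect a clean argument. Given that $\underline{A}$ is a prime ideal of $\underline{S}$, Theorem $3.5$ makes $A$ an intuitionistic fuzzy ideal, so the inequalities $\mu_{A}(xy)\geq\max\{\mu_{A}(x),\mu_{A}(y)\}$ and $\nu_{A}(xy)\leq\min\{\nu_{A}(x),\nu_{A}(y)\}$ come for free and only the reverse inequalities remain. Fixing $x,y\in S$ I would set $\alpha=\mu_{A}(xy)$ and $\beta=\nu_{A}(xy)$; whenever $\alpha>0$ the constraint $\mu_{A}+\nu_{A}\leq 1$ forces $\beta<1$, so that $(xy)_{(\alpha,\beta)}=x_{(\alpha,\beta)}\circ y_{(\alpha,\beta)}$ is a genuine point of $\underline{A}$. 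Primeness then gives $x_{(\alpha,\beta)}\in\underline{A}$ or $y_{(\alpha,\beta)}\in\underline{A}$; in the first case $\mu_{A}(x)\geq\alpha=\mu_{A}(xy)$ and $\nu_{A}(x)\leq\beta=\nu_{A}(xy)$, which combined with the ideal inequalities force both equalities simultaneously, and the second case is symmetric. The only situation needing separate care is the boundary value $\mu_{A}(xy)=0$, where no admissible point can be formed: there the ideal inequality already collapses the $\mu$-equality, and the $\nu$-equality would have to be handled by hand.

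For $(i)\Rightarrow(ii)$ I would again use Theorem $3.5$ for the ideal part and then take a product of points $x_{(\alpha,\beta)}\circ y_{(\gamma,\delta)}=(xy)_{(\alpha\wedge\gamma,\beta\vee\delta)}\in\underline{A}$, aiming to show that one of the factors already lies in $\underline{A}$. Reading off $\mu_{A}(xy)\geq\alpha\wedge\gamma$ and $\nu_{A}(xy)\leq\beta\vee\delta$ and substituting the prime equalities yields $\max\{\mu_{A}(x),\mu_{A}(y)\}\geq\alpha\wedge\gamma$ and $\min\{\nu_{A}(x),\nu_{A}(y)\}\leq\beta\vee\delta$. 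Here lies the main obstacle: the membership bound may be attained by one factor while the non-membership bound is attained by the other, so a priori neither $x_{(\alpha,\beta)}$ nor $y_{(\gamma,\delta)}$ need satisfy both of its defining inequalities at once. The crux of the theorem is precisely this coordination between the $\mu$-part and the $\nu$-part, and I would attack it by a case analysis on which of $\mu_{A}(x),\mu_{A}(y)$ realises the maximum and which of $\nu_{A}(x),\nu_{A}(y)$ realises the minimum, ideally after restricting to the parameter pair forced by $(xy)$ as in the previous direction. Getting these two selections to land on the same factor is the delicate step on which the whole proof turns, and it is where I would concentrate the bulk of the effort.
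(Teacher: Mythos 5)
Your proposal stops exactly where it should not: the ``coordination'' step in $(i)\Rightarrow(ii)$ that you defer to a future case analysis is the whole theorem, and you should know two things about it. First, the paper's own proof handles it by committing precisely the fallacy you refused to commit: having derived $\max\{\mu_{A}(x),\mu_{A}(y)\}\geq\alpha$ and $\min\{\nu_{A}(x),\nu_{A}(y)\}\leq\beta$, the paper writes ``i.e., $(\mu_{A}(x)\geq\alpha$ and $\nu_{A}(x)\leq\beta)$ or $(\mu_{A}(y)\geq\alpha$ and $\nu_{A}(y)\leq\beta)$,'' which is the invalid distribution $(P_{1}\vee P_{2})\wedge(Q_{1}\vee Q_{2})\Rightarrow(P_{1}\wedge Q_{1})\vee(P_{2}\wedge Q_{2})$. (The paper also only tests products with equal parameters $x_{(\alpha,\beta)}\circ y_{(\alpha,\beta)}$, and reads primeness of $\underline{A}$ element-wise rather than through ideals as Definition 2.6 requires, but these are secondary.) Second, no case analysis can close the gap, because $(i)\Rightarrow(ii)$ is false. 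Let $S=\{0,a,b\}$ be the meet semilattice with $a\wedge b=0$ (so $aa=a$, $bb=b$, $ab=ba=0$, and $0$ absorbing), and put $\mu_{A}(0)=\mu_{A}(a)=0.5$, $\mu_{A}(b)=0.3$, $\nu_{A}(0)=\nu_{A}(b)=0.2$, $\nu_{A}(a)=0.5$. Checking all six unordered pairs shows $\mu_{A}(xy)=\max\{\mu_{A}(x),\mu_{A}(y)\}$ and $\nu_{A}(xy)=\min\{\nu_{A}(x),\nu_{A}(y)\}$ throughout, so $A$ is an intuitionistic fuzzy prime ideal of $S$. But for $p=a_{(0.5,0.3)}$ and $q=b_{(0.5,0.3)}$ (legitimate points, since $0.5+0.3\leq 1$) one has $p\circ q=0_{(0.5,0.3)}\in\underline{A}$ (as $\mu_{A}(0)=0.5\geq 0.5$ and $\nu_{A}(0)=0.2\leq 0.3$), while $p\notin\underline{A}$ (since $\nu_{A}(a)=0.5>0.3$) and $q\notin\underline{A}$ (since $\mu_{A}(b)=0.3<0.5$): the $\mu$-maximum is attained at $a$ and the $\nu$-minimum at $b$, exactly the mismatch you predicted. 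Since $\underline{S}$ is commutative and $\underline{A}$ is an ideal of $\underline{S}$ (Theorem 3.5), the principal ideals $P,Q$ of $\underline{S}$ generated by $p,q$ satisfy $P\circ Q\subseteq\underline{A}$ with $P\not\subseteq\underline{A}$ and $Q\not\subseteq\underline{A}$, so $\underline{A}$ is not prime under Definition 2.6 either.

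Your $(ii)\Rightarrow(i)$ matches the paper's argument (the paper takes $\alpha=\mu_{A}(xy)$, $\beta=\nu_{A}(xy)$ and applies element-wise primeness), and it is sound for pairs with $\mu_{A}(xy)>0$; but the boundary case $\mu_{A}(xy)=0$ that you flag, and the paper silently ignores, is also fatal rather than merely awkward. Take $S=\{0,a,b,c\}$ with $0$ absorbing and $xy=c$ whenever $x,y\in\{a,b,c\}$; let $\mu_{A}$ be $1$ at $0$ and $0$ elsewhere, and let $\nu_{A}$ take the values $0,\,0.5,\,0.5,\,0.3$ at $0,a,b,c$. Then $\underline{A}$ is exactly the set of points based at $0$; a product of two points is based at $0$ only when one factor is based at $0$, and every point based at $0$ lies in $\underline{A}$, so $\underline{A}$ is a prime ideal of $\underline{S}$ under either reading. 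Yet $\nu_{A}(ab)=0.3\neq 0.5=\min\{\nu_{A}(a),\nu_{A}(b)\}$, so $A$ is not an intuitionistic fuzzy prime ideal: primeness of $\underline{A}$ simply carries no information about the $\nu$-values on the set where $\mu_{A}$ vanishes, because no admissible point exists there to test. So the verdict is: your proof attempt is incomplete in both directions, but the incompleteness reflects genuine falsity of the statement rather than a missing trick, and your two flagged difficulties are exactly the two counterexamples; the paper's proof papers over the first with an invalid logical step and over the second by ignoring the degenerate case.
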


\begin{proof}
$(i)\Rightarrow (ii):$ Let $A=(\mu_{A},\nu_{A})$ be an intuitionistic fuzzy prime ideal of $S.$ Then $\mu_{A}(xy)=\max\{\mu_{A}(x),\mu_{A}(y)\}$ and $\nu_{A}(xy)=\min\{\nu_{A}(x),\nu_{A}(y)\}\forall x,y\in S.$ Let $x_{(\alpha,\beta)}\circ y_{(\alpha,\beta)}\in\underline{A},i.e.,(xy)_{(\alpha\wedge\alpha,\beta\vee\beta)}=(xy)_{(\alpha,\beta)}\in\underline{A}.$ Then $\mu_{A}(xy)\geq\alpha$ and $\nu_{A}(xy)\leq\beta$ which implies that $\max\{\mu_{A}(x),\mu_{A}(y)\}\geq\alpha$ and $\min\{\nu_{A}(x),\nu_{A}(y)\}\leq\beta.$ Then $(\mu_{A}(x)\geq\alpha$ or $\mu_{A}(y)\geq\alpha)$ and $(\nu_{A}(x)\leq\beta$ or $\nu_{A}(y)\leq\beta),i.e., (\mu_{A}(x)\geq\alpha$ and $\nu_{A}(x)\leq\beta)$ or $(\mu_{A}(y)\geq\alpha$ and $\nu_{A}(y)\leq\beta),i.e., x_{(\alpha,\beta)}\in\underline{A}$ or $y_{(\alpha,\beta)}\in\underline{A}.$ Hence $\underline{A}$ is a prime ideal of $\underline{S}.$

$(ii)\Rightarrow (i):$ Let $\underline{A}$ be a prime ideal of $\underline{S}.$ Let $\mu_{A}(xy)=\alpha$ and $\nu_{A}(xy)=\beta.$ Then $x_{(\alpha,\beta)}\circ y_{(\alpha,\beta)}=(xy)_{(\alpha,\beta)}\in\underline{A}$ implies that $x_{(\alpha,\beta)}\in\underline{A}$ or $y_{(\alpha,\beta)}\in\underline{A}($since $\underline{A}$ is a prime ideal of $\underline{S}).$ Then $(\mu_{A}(x)\geq\alpha$ and $\nu_{A}(x)\leq\beta)$ or $(\mu_{A}(y)\geq\alpha$ and $\nu_{A}(y)\leq\beta),i.e., (\mu_{A}(x)\geq\alpha$ or $\mu_{A}(y)\geq\alpha)$ and $(\nu_{A}(y)\leq\beta$ or $\nu_{A}(y)\leq\beta),i.e.,\max\{\mu_{A}(x),\mu_{A}(y)\}\\\geq\alpha$ and $\min\{\nu_{A}(x),\nu_{A}(y)\}\leq\beta,i.e.,\max\{\mu_{A}(x),\mu_{A}(y)\}\geq\mu_{A}(xy)$ and $\min\{\nu_{A}(x),\nu_{A}\\(y)\}\leq\nu_{A}(xy).........(A).$ Since $\underline{A}$ is a prime ideal of $\underline{S},$ so $\underline{A}$ is an ideal of $\underline{S}.$ So, by Theorem $3.5,$ $A=(\mu_{A},\nu_{A})$ is an intuitionistic fuzzy ideal of $S.$ Then $\mu_{A}(xy)\geq\max\{\mu_{A}(x),\mu_{A}(y)\}$ and $\nu_{A}(xy)\leq\min\{\nu_{A}(x),\nu_{A}(y)\}\forall x,y\in S........(B).$ Combining $(A)$ and $(B)$ we have, $\mu_{A}(xy)=\max\{\mu_{A}(x),\mu_{A}(y)\}$ and $\nu_{A}(xy)=\min\{\nu_{A}(x),\nu_{A}(y)\}\forall x,y\in S.$ Hence $A=(\mu_{A},\nu_{A})$ is an intuitionistic fuzzy prime ideal of $S.$
\end{proof}


\end{document}